\pdfoutput=1
\documentclass[a4paper,UKenglish,cleveref, autoref, thm-restate]{lipics-v2021}
%This is a template for producing LIPIcs articles. 
%See lipics-v2021-authors-guidelines.pdf for further information.
%for A4 paper format use option "a4paper", for US-letter use option "letterpaper"
%for british hyphenation rules use option "UKenglish", for american hyphenation rules use option "USenglish"
%for section-numbered lemmas etc., use "numberwithinsect"
%for enabling cleveref support, use "cleveref"
%for enabling autoref support, use "autoref"
%for anonymousing the authors (e.g. for double-blind review), add "anonymous"
%for enabling thm-restate support, use "thm-restate"
%for enabling a two-column layout for the author/affilation part (only applicable for > 6 authors), use "authorcolumns"
%for producing a PDF according the PDF/A standard, add "pdfa"

%\graphicspath{{./graphics/}}%helpful if your graphic files are in another directory
%\usepackage{bookmark}
\bibliographystyle{plainurl}% the mandatory bibstyle

%\title{Neighborhood persistency on integer linear optimization on unit-two-variable-per-inequality systems}
%\title{Neighborhood persistency of the linear optimization relaxation of integer linear optimization on unit-two-variable-per-inequality systems}
\title{Neighborhood persistency of the linear optimization relaxation of integer linear optimization}

\titlerunning{Neighborhood persistency of integer linear optimization on UTVPI systems} %TODO optional, please use if title is longer than one line

%\author{author names removed}{*}{}{}{}

\author{Kei Kimura%\footnote{Corresponding author}
}{Faculty of Information Science and Electrical Engineering, Kyushu University, Japan \and \url{https://researchmap.jp/kei-kimura?lang=en} }{kkimura@inf.kyushu-u.ac.jp}{https://orcid.org/0000-0002-0560-5127}{Supported by JST, ACT-X Grant Number JPMJAX200C, Japan, and JSPS KAKENHI Grant Numbers JP19K22841, JP21K17700.}%TODO mandatory, please use full name; only 1 author per \author macro; first two parameters are mandatory, other parameters can be empty. Please provide at least the name of the affiliation and the country. The full address is optional
\author{Kotaro Nakayama}{Oplan Incorporated, Japan}{kotaronakayama22@gmail.com}{}{}
\authorrunning{K. Kimura and K. Nakayama} %TODO mandatory. First: Use abbreviated first/middle names. Second (only in severe cases): Use first author plus 'et al.'
%
%\Copyright{Kei Kimura and Kotaro Nakayama} %TODO mandatory, please use full first names. LIPIcs license is "CC-BY";  http://creativecommons.org/licenses/by/3.0/

\ccsdesc[100]{Theory of computation~Design and analysis of algorithms~Mathematical optimization~Discrete optimization} %TODO mandatory: Please choose ACM 2012 classifications from https://dl.acm.org/ccs/ccs_flat.cfm 

\keywords{integer linear optimization, linear optimization, unit-two-variable-per-inequality system, persistency} %TODO mandatory; please add comma-separated list of keywords

\category{} %optional, e.g. invited paper

\relatedversion{} %optional, e.g. full version hosted on arXiv, HAL, or other respository/website
%\relatedversiondetails[linktext={opt. text shown instead of the URL}, cite=DBLP:books/mk/GrayR93]{Classification (e.g. Full Version, Extended Version, Previous Version}{URL to related version} %linktext and cite are optional

%\supplement{}%optional, e.g. related research data, source code, ... hosted on a repository like zenodo, figshare, GitHub, ...
%\supplementdetails[linktext={opt. text shown instead of the URL}, cite=DBLP:books/mk/GrayR93, subcategory={Description, Subcategory}, swhid={Software Heritage Identifier}]{General Classification (e.g. Software, Dataset, Model, ...)}{URL to related version} %linktext, cite, and subcategory are optional

%\funding{(Optional) general funding statement \dots}%optional, to capture a funding statement, which applies to all authors. Please enter author specific funding statements as fifth argument of the \author macro.

%\acknowledgements{I want to thank \dots}%optional

\nolinenumbers %uncomment to disable line numbering

\hideLIPIcs  %uncomment to remove references to LIPIcs series (logo, DOI, ...), e.g. when preparing a pre-final version to be uploaded to arXiv or another public repository

%Editor-only macros:: begin (do not touch as author)%%%%%%%%%%%%%%%%%%%%%%%%%%%%%%%%%%
\EventEditors{John Q. Open and Joan R. Access}
\EventNoEds{2}
\EventLongTitle{42nd Conference on Very Important Topics (CVIT 2016)}
\EventShortTitle{CVIT 2016}
\EventAcronym{CVIT}
\EventYear{2016}
\EventDate{December 24--27, 2016}
\EventLocation{Little Whinging, United Kingdom}
\EventLogo{}
\SeriesVolume{42}
\ArticleNo{23}
%%%%%%%%%%%%%%%%%%%%%%%%%%%%%%%%%%%%%%%%%%%%%%%%%%%%%%

\begin{document}

\maketitle

%TODO mandatory: add short abstract of the document
\begin{abstract}
For an integer linear optimization (ILO) problem, persistency of its linear optimization (LO) relaxation is a property that for every optimal solution of the relaxation that assigns integer values to some variables, there exists an optimal solution of the ILO problem in which these variables retain the same values. Although persistency has been used to develop heuristic, approximation, and fixed-parameter algorithms for special cases of ILO, its applicability remains unknown in the literature. In this paper we reveal a maximal subclass of ILO such that its LO relaxation has persistency. Specifically, we show that the LO relaxation of ILO on unit-two-variable-per-inequality (UTVPI) systems has persistency and is (in a certain sense) maximal among such ILO. Our persistency result generalizes the results of Nemhauser and Trotter, Hochbaum et al., and Fiorini et al. Even more, we propose a stronger property called \emph{neighborhood persistency} and show that the LO relaxation of ILO on UTVPI systems in general has this property. Using this stronger result, we obtain a fixed-parameter algorithm (where the parameter is the solution size) and another proof of two-approximability for ILO on UTVPI systems where objective functions and variables are non-negative.
\end{abstract}

\section{Introduction}
\label{sec:introduction}

In this paper, we mainly investigate the ILO problem on a unit-two-variable-per-inequality (UTVPI) system.
In this problem, we are given matrix $A \in \{ -1,0,1 \}^{m \times n}$ with at most two nonzero elements per row, 
integer vector $b \in \mathbb{Z}^m$, and rational vector $w \in \mathbb{Q}^n$, and our task is to 
compute the optimal value of the following ILO problem:
\begin{align}\label{eq:ILO-intro}
\begin{array}{ll}
\text{minimize} & w^\mathrm{T}x\\
\text{subject to} & Ax \ge b,\\
% & x \ge 0,\\
 & x \in \mathbb{Z}^n.
\end{array}
\end{align}
ILO on UTVPI systems has many applications in practice and theory.
Practical applications include map labeling~\cite{BKP11} and scheduling~\cite{UpC13}, and 
theoretical applications include various problems in graph theory and combinatorial optimization such as the vertex cover problem, the maximum independent set problem, a disjoint path problem~\cite{Sch91}, 
and the minimum clique cover problem~\cite{BOS13}.
ILO on UTVPI systems is strongly NP-hard, since it includes the vertex cover problem, which is NP-hard.
When UTVPI systems are \emph{monotone}, i.e., each constraint is of the form $x_p -x_ q \ge c$, 
they are sometimes called \emph{difference constraint systems} (DCSs), and ILO on DCSs is solvable in polynomial time by minimum cost flow algorithms (see, e.g.,~\cite{AMO93}).
ILO on DCSs includes the dual linear optimization problem of the shortest path problem, a fundamental problem in combinatorial optimization.
The feasibility problem of UTVPI systems has also been extensively studied. 
The feasibility problem is solvable in polynomial time, and many algorithms have been proposed (\cite{Sch91,JMS94,Sub04,LaM05,SuW17}).
The feasibility problem also appears in practice, e.g., abstract interpretation~\cite{Min06}.

It is quite common to solve an ILO problem by first solving its \emph{linear optimization (LO) relaxation} and rounding up or down the obtained LO solution.
Here the linear optimization relaxation of an ILO problem is a problem where the integrality condition of the variables (i.e., $x \in \mathbb{Z}^n$) is dropped (or changed to $x \in \mathbb{R}^n$).
An optimal solution of the LO relaxation is sometimes called an optimal \emph{fractional} solution of the ILO problem, whose 
optimal solution is called an optimal \emph{integer} solution.

The LO relaxation of certain ILO subclasses has \emph{persistency}.
Persistency of the LO relaxation of an ILO problem is a property that for every optimal fractional solution that assigns integer values to some variables, 
an optimal \emph{integer} solution exists in which these variables retain the same values.
If the LO relaxation of an ILO problem has persistency, 
one can solve the problem by obtaining an optimal fractional solution (in polynomial time) by solving linear optimization, 
substituting the integer values of the fractional solution to the corresponding variables, 
and solving the resulting problem with fewer variables.
This algorithmic framework gives not only a fast heuristic algorithm but also a theoretically fast one.
Indeed, persistency was first shown for the LO relaxation of an ILO formulation of the vertex cover problem~\cite{NeT75}, which is ILO on special UTVPI systems, and 
used to obtain a fixed-parameter algorithm for the vertex cover problem~\cite{LNR14}.
The persistency result in \cite{NeT75} is generalized to special cases of ILO on UTVPI systems~\cite{HMN93,FJW21}.

\subsection*{Our contribution}

In this paper, we show that the LO relaxation of ILO on UTVPI systems in general has persistency.
More strongly, we propose \emph{neighborhood persistency}, which is stronger than persistency, and show 
that the LO relaxation of ILO on UTVPI systems in general has neighborhood persistency.\footnote{Neighborhood persistency resembles the generalization of persistency~\cite{FJW21}.
However, that work assumed that a solution of the relaxation problem is extreme, although it is not in the neighborhood persistency.}
Neighborhood persistency is a property that for every optimal fractional solution $x^*$, 
there exists an optimal integer solution in the \emph{integer neighborhood} of $x^*$, 
where for vector $x \in \mathbb{R}^n$ its integer neighborhood $N(x) \subseteq \mathbb{Z}^n$ is defined as 
$N(x)=\{ z \in \mathbb{Z}^n \mid |z_j -x_j| < 1\ (j=1,\dots, n) \}$.
To obtain our result, 
we use the celebrated strong duality theorem on LO, 
which was not used in the proofs of the previous results on persistency mentioned above.

We also show that ILO on UTVPI systems is a maximal subclass of ILO with its LO relaxation having persistency in the sense that 
if we allow (i) an inequality with \emph{three} variables whose coefficients are all one or (ii) 
an inequality with two variables whose coefficients are in $\{1,2\}$, then not even persistency holds for the LO relaxation of such ILO.

From our neighborhood-persistency result, we can solve ILO on UTVPI systems by 
(i) solving the LO relaxation 
and then (ii) solving a \emph{binary} ILO problem (i.e., each variable takes value zero or one), since an optimal integer solution (if it exists) can be obtained by rounding up or down the fractional values of the 
optimal LO solution.
Using this two-step algorithm we show fixed-parameter tractability (in terms of the solution size) of the ILO on UTVPI systems with non-negative objective functions and non-negative variables.
We also obtain another proof of two-approximability of such ILO problems, which was first shown by Hochbaum et al.~\cite{HMN93}.
Note that a (half-integral) optimal solution of the LO relaxation of an ILO problem on a UTVPI system can be efficiently computed 
by first transforming it to an ILO problem on a DCS by a previously proposed method \cite{HMN93} and solving the transformed ILO problem by a minimum cost flow algorithm.

\subsection*{Previous and Related work}
The vertex cover problem is, given an undirected graph $G=(V,E)$, to compute the minimum size of vertex subset $C \subseteq V$ such that every edge in $E$ has at least one end vertex in $C$.
It is well-known that this problem is formulated as ILO on special UTVPI systems as follows.
The variable set is $\{x_i \mid v_i \in V \}$, where each variable is binary, 
the objective function is $\sum_{i=1}^{|V|} x_i$, and the linear system is $\{ x_i + x_j \ge 1 \mid \{i,j\} \in E \}$.
Nemhauser and Trotter~\cite{NeT75} showed that the LO relaxation of this ILO formulation has persistency.
Generalizing this result, Hochbaum et al.~\cite{HMN93} showed that persistency also holds for the LO relaxation of ILO on UTVPI systems if the variables are binary and the coefficients in the objective function are non-negative.
Note that persistency and neighborhood persistency are the same when the variables are binary.
Fiorini et al.~\cite{FJW21} gave another generalization that persistency holds for the LO relaxation of ILO on UTVPI systems if each inequality is of the form $x_i + x_j \le c$ for some integer $c$.
It should be noted that optimal solutions of the LO relaxation are assumed to be half-integral in these persistency results, 
while not in our (neighborhood) persistency result in this paper.

A previous work \cite{HMN93} showed that one can obtain a two-approximate solution of the (feasible) ILO problem~\eqref{eq:ILO} by rounding up or down a half-integral optimal solution of the LO relaxation if $w$ is non-negative and the variables take non-negative values. 

The work \cite{HMN93} also showed that any ILO problem on a two-variable-per-inequality system (i.e., input matrix $A$ is in $\mathbb{Q}^{m \times n}$ and each row of $A$ has at most two nonzero elements) can be reduced to a binary ILO problem on a UTVPI system of pseudo-polynomial size if upper and lower bounds exists on the value of each variable. 
From the reduction and persistency of the LO relaxation of binary ILO on UTVPI systems (for non-negative objective functions), we can obtain upper and lower bounds on the values of the variables in an optimal integer solution of an ILO problem on a two-variable-per-inequality system. 
However, this does not imply neighborhood persistency of the LO relaxation of ILO on UTVPI systems.

The persistency of the LO relaxation of the ILO formulation of the vertex cover problem is generalized to the so-called $k$-submodular relaxation \cite{IWY16} where $k$ is any positive integer.
In $k$-submodular relaxation, a problem with values in $\{1,\dots,k\}$ is relaxed to one with values in $\{0,1,\dots,k\}$.
Although exactly one relaxed value (i.e., value $0$) exists in $k$-submodular relaxation, 
there exists infinite relaxed values (i.e., all the values in $\mathbb{R} \setminus \mathbb{Z}$) in our relaxation of ILO to LO.

Recently, Hirai~\cite{Hir18} gave a general result on persistency in discrete convex analysis on graph structures.
We investigate the relation between this and our persistency result and show that a slightly weaker version of our main theorem can be shown using this general result.
Details are found in \cref{appendix:L-extendability}.

\subsection*{Outline}
The rest of our paper is organized as follows.
Section~\ref{sec:preliminaries} formally defines our problem and (neighborhood) persistency, and provides useful results.
Section~\ref{sec:main-result} shows our main result, namely, neighborhood persistency of the LO relaxation of ILO on UTVPI systems.
Section~\ref{sec:maximality} gives examples of ILO problems on non-UTVPI systems such that their LO relaxations lack persistency, 
which shows the maximality of ILO on UTVPI systems among ILO with its LO relaxation having (neighborhood) persistency.
Section~\ref{sec:approx-FPT} shows that ILO on UTVPI systems with non-negative objective functions and non-negative variables is fixed-parameter tractable and two-approximable by using neighborhood persistency.
Section~\ref{sec:conclusion} concludes our paper.

\section{Preliminaries}\label{sec:preliminaries}
Let $\mathbb{Z}$, $\mathbb{Q}$, and $\mathbb{R}$ denote the sets of integers, rationals, and reals, respectively.

We consider an integer linear optimization (ILO) problem of the following form throughout the paper:
\begin{align}\label{eq:ILO}
\begin{array}{ll}
\text{minimize} & w^\mathrm{T}x\\
\text{subject to} & Ax \ge b,\\
 & x \in \mathbb{Z}^n, 
\end{array}
\end{align}
where $A \in \{ -1,0,1 \}^{m \times n}$ is a matrix having at most two nonzero elements per row,
$b \in \mathbb{Z}^m$, $w \in \mathbb{Q}^n$, and $m,n$ are positive integers.
A vector $x \in \mathbb{Z}^n$ satisfying $Ax \ge b$ is called a \emph{feasible} solution of the ILO problem~\eqref{eq:ILO}.
If the ILO problem~\eqref{eq:ILO} has a feasible solution, then it is called \emph{feasible}.
A feasible solution of the ILO problem~\eqref{eq:ILO} is called an \emph{optimal integer solution} or \emph{optimal solution} (when it is clear from context) 
if it has the minimum objective value among the feasible solutions.

The following linear optimization (LO) problem is called the \emph{LO relaxation} of the ILO problem~\eqref{eq:ILO}:

\begin{align}\label{eq:LO}
\begin{array}{ll}
\text{minimize} & w^\mathrm{T}x\\
\text{subject to} & Ax \ge b,\\
 & x \in \mathbb{R}^n.
\end{array}
\end{align}
Feasibility in the LO problem~\eqref{eq:LO} is defined analogously to that in the ILO problem~\eqref{eq:ILO}.
A feasible solution of the LO problem~\eqref{eq:LO} is called an \emph{optimal fractional solution} or \emph{optimal solution} (when it is clear from context) 
if it has the minimum objective value among the feasible solutions.

Now, we provide the key notions of this paper.

\begin{definition}[Persistency]
LO relaxation \eqref{eq:LO} is \emph{persistent} if 
for every optimal fractional solution that assigns integer values to some variables, 
there exists an optimal integer solution of \eqref{eq:ILO} in which these variables retain the same values.
Namely, for every optimal fractional solution $x^*$, there exists an optimal integer solution $z^*$ such that 
$x^*_j \in \mathbb{Z}$ implies $x^*_j = z^*_j$ for each $j =1, \dots, n$.
\end{definition}

\begin{definition}[Integer Neighborhood]
For vector $x \in \mathbb{R}^n$, 
its \emph{integer neighborhood} $N(x) \subseteq \mathbb{Z}^n$ is defined as 
$N(x)=\{ z \in \mathbb{Z}^n \mid |z_j -x_j| < 1\ (j=1,\dots, n) \}$.
\end{definition}

We focus on the following property, which is stronger than persistency.

\begin{definition}[Neighborhood Persistency]
LO relaxation \eqref{eq:LO} is \emph{neighborhood persistent} if 
for every optimal fractional solution $x^*$, 
there exists an optimal integer solution of \eqref{eq:ILO} in integer neighborhood $N(x^*)$ of $x^*$.
\end{definition}

Note that for vector $x \in \mathbb{R}^n$ and $j \in \{1,\dots, n\}$ if $x_j \in \mathbb{Z}$, then $z_j = x_j$ for any $z \in N(x)$.
Thus, neighborhood persistency implies (ordinary) persistency.

We will show our main result using the following form of the strong duality of LO.

\begin{theorem}[Theorem 5.4 in~\cite{Sch03}]
\label{thm:strong-duality}
Let $A$ be an integer matrix, let $b$ be an integer vector, and let $w$ be a rational vector.
If at least one of $\min \{ w^\mathrm{T}x \mid Ax \ge b, x \in \mathbb{R}^n \}$ or 
$\max \{ b^\mathrm{T}y \mid A^\mathrm{T}y = w, y\ge 0, y \in \mathbb{R}^m \}$ is bounded, 
then $\min \{ w^\mathrm{T}x \mid Ax \ge b, x \in \mathbb{R}^n \} = \max \{ b^\mathrm{T}y \mid A^\mathrm{T}y = w, y\ge 0, y \in \mathbb{R}^m \}$.
\end{theorem}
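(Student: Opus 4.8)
The integrality of $A,b$ and rationality of $w$ play no role in the value equality asserted here; the substance is the ordinary strong duality of linear optimization over $\mathbb{R}$, so I would prove that and note that these data-type hypotheses are simply inherited from the surrounding development in which this theorem sits. The plan is in three moves: first record \emph{weak duality} by a one-line computation; second, close the duality gap in the case where the primal minimum is finite, using the affine form of Farkas' lemma; and third, dispatch the case where the dual maximum is finite by the symmetry of linear-optimization duality.

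For weak duality I would take any primal-feasible $x$ (so $Ax \ge b$) and any dual-feasible $y$ (so $A^\mathrm{T}y = w$ and $y \ge 0$), and compute $w^\mathrm{T}x = (A^\mathrm{T}y)^\mathrm{T}x = y^\mathrm{T}(Ax) \ge y^\mathrm{T}b = b^\mathrm{T}y$, where the inequality uses $y \ge 0$ together with $Ax \ge b$. Hence $b^\mathrm{T}y \le w^\mathrm{T}x$ for every such pair, which already gives $\max\{b^\mathrm{T}y \mid A^\mathrm{T}y=w,\,y\ge 0\} \le \min\{w^\mathrm{T}x \mid Ax \ge b\}$ whenever both feasible regions are nonempty, and shows that boundedness of either problem constrains the other.

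Now suppose the primal optimal value $\gamma := \min\{w^\mathrm{T}x \mid Ax \ge b\}$ is finite; then $\{x \mid Ax \ge b\}$ is nonempty and the inequality $w^\mathrm{T}x \ge \gamma$ is valid on it. Weak duality against the primal-feasible points shows that every dual-feasible $y$ satisfies $b^\mathrm{T}y \le \gamma$, so the dual maximum is at most $\gamma$. For the reverse direction I would invoke the affine form of Farkas' lemma: since the feasible system $Ax \ge b$ implies $w^\mathrm{T}x \ge \gamma$, there exists $y \ge 0$ with $A^\mathrm{T}y = w$ and $b^\mathrm{T}y \ge \gamma$. This $y$ is dual-feasible with objective value at least $\gamma$, so the dual maximum is at least $\gamma$, hence exactly $\gamma$ and attained at $y$. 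Thus the two optimal values coincide, which settles this case and simultaneously certifies dual feasibility and attainment.

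The remaining case, in which the dual maximum is finite, follows by applying the identical argument to the dual problem: rewriting $\max\{b^\mathrm{T}y \mid A^\mathrm{T}y=w,\,y\ge 0\}$ as a minimization in inequality form and forming its linear-optimization dual returns (an equivalent of) the primal, so the symmetry of duality reduces it to the case already handled. The main obstacle, and the only nonroutine ingredient, is the affine Farkas lemma used in the crux step. I would obtain it by homogenization from the standard theorem of the alternative: the system $Ax \ge b$, $w^\mathrm{T}x < \gamma$ is infeasible, and a nonnegative combination of its rows certifying this infeasibility yields exactly the desired $y$ (feasibility of $Ax \ge b$ alone forces the multiplier of $w^\mathrm{T}x < \gamma$ to be positive, which is what lets one normalize). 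The homogeneous Farkas lemma underlying this is itself established either by Fourier--Motzkin elimination of the $x$-variables or by separating the point $w$ from the closed, finitely generated cone $\{A^\mathrm{T}y \mid y \ge 0\}$; everything downstream of it is the bookkeeping displayed above.
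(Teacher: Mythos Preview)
The paper does not prove this statement at all: it is quoted verbatim as Theorem~5.4 from Schrijver's book and used as a black box in the proof of \cref{thm:main}. There is therefore no ``paper's own proof'' to compare against.

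That said, your argument is a correct and entirely standard derivation of LP strong duality. The weak-duality computation is fine, and the key step---invoking the affine Farkas lemma to produce a dual-feasible $y$ with $b^\mathrm{T}y \ge \gamma$ once the primal infimum $\gamma$ is finite---is exactly the textbook route (and indeed is how Schrijver proves it). Your observation that the integrality/rationality hypotheses on $A,b,w$ are irrelevant to the equality of optimal values is also correct; those hypotheses matter elsewhere in the book for integrality results but not for real LP duality itself. The only place you are slightly breezy is the ``by symmetry'' reduction when the dual maximum is assumed finite: rewriting the equality-and-nonnegativity-constrained dual as an inequality-form primal and checking that its LP dual recovers the original primal is routine, but if you were writing this out in full you would want to display that bookkeeping explicitly.
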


For $a \in \mathbb{R}$, define $\lceil a \rceil = \min\{ n \in \mathbb{Z} \mid n \ge a \}$ and 
$\lfloor a \rfloor = \max\{ n \in \mathbb{Z} \mid n \le a \}$.
We use the following easy-to-prove fact in some proofs of our results.

\begin{lemma}\label{lem:rounddown-property}
For $a \in \mathbb{R}$ and $b \in \mathbb{Z}$, 
if $a \ge b$, then $\lfloor a \rfloor \ge b$.
\end{lemma}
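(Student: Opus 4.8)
The plan is to argue directly from the definition of the floor function. Recall that $\lfloor a \rfloor = \max\{ n \in \mathbb{Z} \mid n \le a \}$, so the set $S := \{ n \in \mathbb{Z} \mid n \le a \}$ is exactly the set over which this maximum is taken. The key observation is simply that the hypotheses place $b$ inside $S$: since $b \in \mathbb{Z}$ by assumption and $b \le a$ (which is precisely the hypothesis $a \ge b$), we have $b \in S$.

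From there the conclusion is immediate. The first step is to note that $S$ is nonempty (it contains $b$) and bounded above by $a$, so the maximum $\lfloor a \rfloor = \max S$ is well defined. The second step is to invoke the defining property of a maximum: every element of $S$ is at most $\max S$. Applying this to the element $b \in S$ yields $b \le \max S = \lfloor a \rfloor$, which is the desired inequality $\lfloor a \rfloor \ge b$.

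I do not anticipate any real obstacle here; the statement is essentially a restatement of the maximality clause in the definition of $\lfloor \cdot \rfloor$. The only point worth being slightly careful about is that one should not appeal to monotonicity of $\lfloor \cdot \rfloor$ together with $\lfloor b \rfloor = b$ for $b \in \mathbb{Z}$, since that would either be circular or require proving monotonicity first; the membership argument above is cleaner and entirely self-contained. Accordingly, the whole proof is a single line: $b \in \mathbb{Z}$ and $b \le a$ imply $b \in \{ n \in \mathbb{Z} \mid n \le a \}$, hence $b \le \lfloor a \rfloor$.
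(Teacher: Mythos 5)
Your proof is correct, but it takes a different route from the paper's. You argue directly from the paper's definition $\lfloor a \rfloor = \max\{ n \in \mathbb{Z} \mid n \le a \}$: the hypotheses $b \in \mathbb{Z}$ and $b \le a$ place $b$ in the set over which the maximum is taken, so $b \le \lfloor a \rfloor$ immediately, and you correctly note the set is nonempty and bounded above so the maximum exists. The paper instead decomposes $a = \alpha + \beta$ with $\alpha \in \mathbb{Z}$ and $0 \le \beta < 1$, deduces $\alpha \ge b - \beta > b - 1$, and uses integrality of $\alpha$ to conclude $\alpha \ge b$, finally identifying $\lfloor a \rfloor = \alpha$. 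Your membership argument is the more economical one given the stated definition, since it needs no auxiliary decomposition and no ``integer strictly greater than $b-1$ is at least $b$'' step; the paper's argument, in exchange, makes the quantitative picture (integer part plus fractional part) explicit, which mirrors the style of the case analyses later in the main proof where such decompositions are used repeatedly. Either proof is acceptable, and your caution about avoiding a circular appeal to monotonicity of $\lfloor \cdot \rfloor$ is well placed.
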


\begin{proof}
Let $a = \alpha + \beta$, where $\alpha \in \mathbb{Z}$ and $0 \le \beta < 1$.
Assume that $a \ge b$.
Then $\alpha + \beta \ge b$, implying that $\alpha \ge b - \beta > b - 1$.
Hence, $\alpha$ is an integer greater than $b-1$ and thus $\alpha \ge b$.
Since $\lfloor a \rfloor = \alpha$, we have $\lfloor a \rfloor \ge b$, as desired.
\end{proof}

\section{Main results}\label{sec:main-result}

In this section, we show the following theorem, which is the main result of this paper.

\begin{theorem}
\label{thm:main}
If the integer linear optimization problem~\eqref{eq:ILO} is feasible, 
then its linear optimization relaxation~\eqref{eq:LO} is neighborhood persistent.
\end{theorem}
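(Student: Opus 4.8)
The plan is the following. Let $x^*$ be an optimal fractional solution of \eqref{eq:LO}; if \eqref{eq:LO} is unbounded there is no such $x^*$ and the statement is vacuous, while if it is bounded such an $x^*$ exists by \cref{thm:strong-duality}. Let $z^0$ be an optimal integer solution of \eqref{eq:ILO}, which exists because \eqref{eq:ILO} is feasible and $w^\mathrm{T}x^*$ bounds its value from below. By \cref{thm:strong-duality} there is a dual optimal $y^* \ge 0$ with $A^\mathrm{T}y^* = w$ and $b^\mathrm{T}y^* = w^\mathrm{T}x^*$, and the usual complementary slackness conditions give $y^*_i > 0 \Rightarrow a_i^\mathrm{T}x^* = b_i$, where $a_i^\mathrm{T}$ denotes the $i$-th row of $A$. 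I then define $z \in \mathbb{Z}^n$ by clamping $z^0$ into the box around $x^*$: $z_j := \operatorname{med}(\lfloor x^*_j\rfloor,\, z^0_j,\, \lceil x^*_j\rceil)$, the median of the three values (equivalently, $z^0_j$ truncated into $[\lfloor x^*_j\rfloor, \lceil x^*_j\rceil]$). Then $z \in N(x^*)$ automatically, so it suffices to prove (a) $z$ is feasible for \eqref{eq:ILO}, and (b) $w^\mathrm{T}z \le w^\mathrm{T}z^0$; combined with $w^\mathrm{T}z \ge w^\mathrm{T}z^0$ (optimality of $z^0$), this makes $z$ an optimal integer solution in $N(x^*)$. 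Note that one cannot shortcut (a)+(b) by minimising $w^\mathrm{T}x$ over $\{Ax \ge b,\ \lfloor x^*\rfloor \le x \le \lceil x^*\rceil\}$, since that polyhedron's LO optimum can be strictly below the ILO optimum (e.g.\ for the triangle inequalities), so one really has to round an \emph{integer} solution.

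Part~(a): here ``at most two nonzeros per row'' is crucial. Fix a row; replacing $x_j$ by $-x_j$ for the variables $j$ with $a_{ij} = -1$ keeps the problem class and commutes with $\operatorname{med}$ (as $\lfloor -a\rfloor = -\lceil a\rceil$), so I may assume the inequality is $x_p + x_q \ge b_i$, the one- and zero-variable cases being immediate (the one-variable case using \cref{lem:rounddown-property}). Put $\alpha_j := \lfloor x^*_j\rfloor$; since $z^0_j \in \mathbb{Z}$, one checks that $z_j = \alpha_j$ when $z^0_j \le \alpha_j$ and $z_j = \lceil x^*_j\rceil$ when $z^0_j \ge \alpha_j+1$. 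I then split into the four resulting cases. If $z^0_p \le \alpha_p$ and $z^0_q \le \alpha_q$, then $z_p + z_q = \alpha_p+\alpha_q \ge z^0_p+z^0_q \ge b_i$. If $z^0_p \ge \alpha_p+1$ and $z^0_q \ge \alpha_q+1$, then $z_p + z_q = \lceil x^*_p\rceil + \lceil x^*_q\rceil \ge x^*_p+x^*_q \ge b_i$. In each mixed case, $z_p+z_q$ equals $\alpha_p + \lceil x^*_q\rceil$ or $\lceil x^*_p\rceil + \alpha_q$, both of which exceed $(x^*_p+x^*_q) - 1 \ge b_i - 1$ and are integers, hence are $\ge b_i$.

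Part~(b): using $w = A^\mathrm{T}y^*$, write $w^\mathrm{T}(z - z^0) = \sum_i y^*_i\, a_i^\mathrm{T}(z - z^0)$. By $y^* \ge 0$ and complementary slackness it suffices to prove $c_i := a_i^\mathrm{T}(z - z^0) \le 0$ for every tight row $i$ (those with $a_i^\mathrm{T}x^* = b_i$). For each $j$ write $x^*_j - z^0_j = d_j + e_j$ with $d_j := z_j - z^0_j \in \mathbb{Z}$ and $e_j := x^*_j - z_j$; then $|e_j| < 1$, and whenever $d_j \ne 0$ the numbers $d_j,e_j$ have the same weak sign (if $z^0_j$ lies below the box then $d_j \ge 1$ and $e_j = x^*_j - \lfloor x^*_j\rfloor \ge 0$, and symmetrically above). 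Tightness of $i$ and feasibility of $z^0$ give $a_i^\mathrm{T}(x^* - z^0) = b_i - a_i^\mathrm{T}z^0 \le 0$, i.e.\ $c_i \le -\sum_j a_{ij}e_j$; since at most two $a_{ij}$ are nonzero and each $|e_j| < 1$, this yields $c_i \le 1$, with $c_i = 1$ possible only on a row with two nonzeros $\{p,q\}$. In that case $a_{ip}e_p + a_{iq}e_q \le -1$, which (as $|a_{ij}e_j| < 1$) forces both $a_{ip}e_p < 0$ and $a_{iq}e_q < 0$; the sign coupling then gives $a_{ip}d_p \le 0$ and $a_{iq}d_q \le 0$, so $c_i = a_{ip}d_p + a_{iq}d_q \le 0$ — a contradiction. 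Hence $c_i \le 0$ on every tight row, and (b) follows.

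The hard part is Part~(b). Part~(a) is a mechanical case analysis, but ensuring the objective does not increase genuinely needs \cref{thm:strong-duality} (to express $w$ as a nonnegative combination of the tight rows) together with the ``two variables per inequality plus integer $b$'' argument that improves $c_i \le 1$ to $c_i \le 0$; dropping either ingredient breaks the bound. I also anticipate some bookkeeping overhead from rows with a $-1$ coefficient and from one- or zero-variable rows, but these introduce no new ideas.
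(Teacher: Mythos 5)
Your proposal is correct and takes essentially the same route as the paper: your clamped vector $z=\operatorname{med}(\lfloor x^*\rfloor,z^0,\lceil x^*\rceil)$ coincides with the paper's rounding of $x^*$ toward the optimal integer solution, and you prove the same two facts (feasibility, and $w^\mathrm{T}z\le w^\mathrm{T}z^0$ via strong duality and complementary slackness on tight rows, using integrality of two-variable rows to push the bound from $c_i\le 1$ down to $c_i\le 0$). The only difference is cosmetic: your sign-normalization in part (a) and the uniform $d_j,e_j$ sign-coupling argument in part (b) compress the paper's explicit case analyses (Cases 2.1--2.4 and 3.1--3.4) into a single argument.
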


\begin{proof}
Assume that the ILO problem~\eqref{eq:ILO} is feasible.
If the LO relaxation~\eqref{eq:LO} does not have an optimal fractional solution (i.e., it is unbounded), then 
the condition of the neighborhood persistency of the LO relaxation vacuously holds.
Therefore, we assume that the LO relaxation~\eqref{eq:LO} has an optimal fractional solution in what follows.
Then the ILO problem~\eqref{eq:ILO} is bounded and has an optimal integer solution.
Fix an optimal integer solution $z^*$ and an optimal fractional solution $x^*$ in what follows.
We show that there exists an optimal integer solution of the ILO problem~\eqref{eq:ILO} in integer neighborhood $N(x^*)$ of $x^*$, 
which shows the theorem.

Define $z \in \mathbb{Z}^n$ as 
\begin{align}
z_j = \begin{cases}
x^*_j & (x^*_j \in \mathbb{Z}),\\
\lceil x^*_j \rceil & (x^*_j \not \in \mathbb{Z}\text{ and } x^*_j < z^*_j),\\
\lfloor x^*_j \rfloor & (x^*_j \not \in \mathbb{Z}\text{ and } x^*_j > z^*_j),
\end{cases}
\end{align}
for $j= 1, \dots, n$.
Note that $z \in N(x^*)$. 
We show that $z$ is an optimal integer solution of the ILO problem~\eqref{eq:ILO}.
For this, we show that (i) $z$ is a feasible solution of the ILO problem~\eqref{eq:ILO} (in Claim~\ref{cl:z-feasibility} below), and 
(ii) $w^\mathrm{T}z \le w^\mathrm{T}z^*$ (in Claim~\ref{cl:z-optimality} below).
These imply that $z$ is an optimal integer solution of the ILO problem~\eqref{eq:ILO}, 
since so is $z^*$.

\begin{claim}\label{cl:z-feasibility}
$z$ is a feasible solution of the ILO problem~\eqref{eq:ILO}.
\end{claim}
\begin{proof}
We show that $Az \ge b$ holds.
For each $i = 1, \dots, m$, let $A_i$ be the $i$th row of $A$.
We show that $A_iz \ge b_i$ for each $i = 1, \dots, m$.
Since $Ax \ge b$ is a UTVPI system, 
each $A_ix \ge b_i$ is of the form 
$\sigma x_p \ge b_i$ or $\sigma x_p + \tau x_q \ge b_i$ for some 
$\sigma,\tau \in \{-,+\}$ and $p,q \in \{1, \dots, n\}$ with $p \neq q$.
We divide the proof into single-variable and two-variable cases, and
use the fact that $A_ix^* \ge b_i$ and $A_iz^* \ge b_i$ (since $x^*$ and $z^*$ are feasible solutions) in what follows.

\paragraph*{Case 1: $A_ix = \sigma x_p$.}

\subparagraph*{Case 1.1: $\sigma z_p \ge \sigma x^*_p$.}

Since $\sigma x^*_p \ge b_i$, 
we have $\sigma z_p \ge \sigma x^*_p \ge b_i$.
Hence, we have $A_iz \ge b_i$.

\subparagraph*{Case 1.2: $\sigma z_p < \sigma x^*_p$.}

Since $z_p$ is an integer obtained by rounding $x^*_p$, we have $\sigma z_p = \lfloor \sigma x^*_p \rfloor$.
Moreover, the feasibility of $x^*$ implies that $\sigma x^*_p \ge b_i$, and thus $\lfloor \sigma x^*_p \rfloor \ge b_i$ by the integrality of $b_i$ and \cref{lem:rounddown-property}.
Hence, we have $\sigma z_p = \lfloor \sigma x^*_p \rfloor \ge b_i$, obtaining $A_iz \ge b_i$.
This completes the proof of Case 1.

\paragraph*{Case 2: $A_ix = \sigma x_p + \tau x_q$.}
We divide into the cases based on
the small and large comparison of the values $\sigma z_p,\tau z_q$ and $\sigma x^*_p,\tau x^*_q$, respectively.

\subparagraph*{Case 2.1: $\sigma z_p \ge \sigma x^*_p$ and $\tau z_q \ge \tau x^*_q$.}
We have $\sigma z_p + \tau z_q \ge \sigma x^*_p + \tau x^*_q \ge b_i$, 
obtaining $A_iz \ge b_i$.

\subparagraph*{Case 2.2: $\sigma z_p < \sigma x^*_p$ and $\tau z_q < \tau x^*_q$.}
In this case, we have $\sigma z_p = \lfloor \sigma x^*_p \rfloor$, as in Case 1.2.
Moreover, since $z_p$ is rounded toward $z^*_p$, we have $\sigma z_p \ge \sigma z^*_p$.
Similarly, we have $\tau z_q \ge \tau z^*_q$.
It follows that $\sigma z_p + \tau z_q \ge \sigma z^*_p + \tau z^*_q \ge b_i$, 
obtaining $A_iz \ge b_i$.

\subparagraph*{Case 2.3: $\sigma z_p \ge \sigma x^*_p$ and $\tau z_q < \tau x^*_q$.}
Let $\alpha = (\sigma z_p)- (\sigma x^*_p)$ and $\beta = (\tau x^*_q) - (\tau z_q)$.
By definition, we have $0 \le \alpha < 1$ and $0 < \beta < 1$.
If $\alpha \ge \beta$, then we have 
\begin{align}
\sigma z_p + \tau z_q = \sigma x^*_p + \tau x^*_q +(\alpha-\beta) \ge \sigma x^*_p + \tau x^*_q \ge b_i.
\end{align}
If $\alpha < \beta$, then we have $0 < \beta-\alpha < 1$, and since $\sigma x^*_p + \tau x^*_q = \sigma z_p + \tau z_q +(\beta - \alpha)$ and $\sigma z_p + \tau z_q \in \mathbb{Z}$, we have $\lfloor \sigma x^*_p + \tau x^*_q \rfloor = \sigma z_p + \tau z_q$.
Since $\lfloor \sigma x^*_p + \tau x^*_q \rfloor \ge b_i$ by \cref{lem:rounddown-property}, we have $\sigma z_p + \tau z_q = \lfloor \sigma x^*_p + \tau x^*_q \rfloor \ge b_i$, 
obtaining $A_iz \ge b_i$.

\subparagraph*{Case 2.4: $\sigma z_p < \sigma x^*_p$ and $\tau z_q \ge \tau x^*_q$.}
We can show $A_iz \ge b_i$ in a similar way as in Case 2.3.
This completes the proof of Case 2.

Since we have shown that $A_iz \ge b_i$ $(i = 1, \dots, m)$ for all the cases,
$z$ is a feasible solution of the ILO problem~\eqref{eq:ILO}.
This completes the proof.
\end{proof}

\begin{claim}\label{cl:z-optimality}
$w^\mathrm{T}z \le w^\mathrm{T}z^*$.
\end{claim}
\begin{proof}
To show $w^\mathrm{T}z \le w^\mathrm{T}z^*$, 
we use the duality theorem of linear optimization (see Theorem~\ref{thm:strong-duality}).
The following is the dual LO problem of \eqref{eq:LO}:

\begin{align}\label{eq:LO-dual}
\begin{array}{ll}
\text{maximize} & b^\mathrm{T}y\\
\text{subject to} & 
A^\mathrm{T} y  = w,\\
 & y \ge 0,\\
 & y \in \mathbb{R}^m.
\end{array}
\end{align}

Since we are assuming that the LO problem~\eqref{eq:LO} has an optimal solution, 
the dual LO problem~\eqref{eq:LO-dual} also has an optimal solution and the optimal values of LO problems~\eqref{eq:LO} and \eqref{eq:LO-dual} are the same by Theorem~\ref{thm:strong-duality}.
Fix an optimal solution $y^*$ of the LO problem~\eqref{eq:LO-dual} in what follows.
We have $A^\mathrm{T}y^* = w$ from the equality in the LO problem~\eqref{eq:LO-dual}.
Thus, $w^\mathrm{T}z$ can be rewritten as 
\begin{align}
w^\mathrm{T}z =  (y^*)^\mathrm{T}Az= \sum_{i=1}^{m}y^*_iA_iz ,
\end{align}
where we recall that $A_i$ is the $i$th row of $A$ for each $i = 1, \dots, m$.
Similarly, we have 
\begin{align}
w^\mathrm{T}z^* = \sum_{i=1}^{m}y^*_iA_iz^*.
\end{align}
Therefore, to show $w^\mathrm{T}z \le w^\mathrm{T}z^*$, it suffices to show that 
\begin{align}
\label{eq:sufficient-condition-for-wz<=wz*}
y^*_iA_iz \le y^*_iA_iz^*
\end{align}
for each $i=1,\dots, m$.
We show \eqref{eq:sufficient-condition-for-wz<=wz*} in what follows.

Recall that $x^*$ is an optimal solution of the LO problem~\eqref{eq:LO}.
The following condition called the \emph{complementary slackness} condition holds (see, e.g., Section 5.5 in~\cite{Sch03}):
\begin{align}
y^*_i(A_ix^*-b_i) = 0 
\end{align}
for each $i=1,\dots, m$.
For our purpose, we use the following equivalent form of the complementary slackness condition: 
\begin{align}\label{eq:complementary-slackness}
\text{If $y^*_i > 0$, then $A_ix^*-b_i = 0$}
\end{align}
for each $i=1,\dots, m$.
Now we are ready to show \eqref{eq:sufficient-condition-for-wz<=wz*} for each $i=1,\dots, m$.

Fix $i \in \{1,\dots, m\}$.
If $y^*_i = 0$, then $y^*_iA_iz \le y^*_iA_iz^*$ since $y^*_iA_iz = y^*_iA_iz^* = 0$.
Therefore, we assume that $y^*_i > 0$ in what follows.
Then it suffices to show that $A_iz \le A_iz^*$ for showing \eqref{eq:sufficient-condition-for-wz<=wz*}.
Note that we have $A_ix^*-b_i = 0$ from \eqref{eq:complementary-slackness}.

Since $Ax \ge b$ is a UTVPI system, 
$A_ix \ge b_i$ is of the form 
$\pm x_p \pm x_q \ge b_i$ or 
$\pm x_p \ge b_i$ for some $p,q \in \{1, \dots, n\}$ with $p \neq q$.
We divide the proof into cases by the (non-)integrality of $x^*_p$ and $x^*_q$.
The single-variable case is dealt with in Case 2 below.
Note that $A_iz^* \ge b_i$, since $z^*$ is a feasible solution.

\paragraph*{Case 1: $x^*_p \in \mathbb{Z}$ and $x^*_q \in \mathbb{Z}$.}
Since $z_p=x^*_p$ and $z_q=x^*_q$ by the definition of $z$, 
we have $A_iz = A_ix^* = b_i$.
Since we have $A_iz^* \ge b_i$, we have $A_iz \le A_iz^*$.

\paragraph*{Case 2: $x^*_p\in \mathbb{Z}$ and $x^*_q \not\in \mathbb{Z}$ (This case includes the case of $x^*_p \not\in \mathbb{Z}$ and $x^*_q \in \mathbb{Z}$ by the symmetry of the constraints).}

\subparagraph*{Case 2.1: $A_ix = \pm x_p \pm  x_q$.}
This case does not occur, since 
$x^*_p\in \mathbb{Z}$ and $x^*_q \not\in \mathbb{Z}$ imply that 
$A_ix^* \not\in \mathbb{Z}$, and from $b_i \in \mathbb{Z}$ 
we cannot have $A_ix^* = b_i$.

\subparagraph*{Case 2.2: $A_ix = \pm x_p$.}
Since $z_p=x^*_p$ by the definition of $z$, 
we have $\pm z_p = \pm x^*_p = b_i$.
Since we have $A_iz^* \ge b_i$, we have $A_iz \le A_iz^*$.

\subparagraph*{Case 2.3: $A_ix = \pm x_q$.}
This case does not occur, since $x^*_q \not\in \mathbb{Z}$ implies that 
$A_ix^* \not\in \mathbb{Z}$, and from $b_i \in \mathbb{Z}$ 
we cannot have $A_ix^* = b_i$.

\paragraph*{Case 3: $x^*_p \not\in \mathbb{Z}$ and $x^*_q \not\in \mathbb{Z}$.}
We divide into cases where $A_ix \ge b_i$ is 
$x_p + x_q \ge b_i$, 
$x_p - x_q \ge b_i$, 
$-x_p + x_q \ge b_i$, or
$-x_p - x_q \ge b_i$.
Further, we divide into cases by
the small and large comparison of the values $x^*_p,x^*_q$ and $z^*_p,z^*_q$.
Since $x^*_p,x^*_q \not\in \mathbb{Z}$ and $z^*_p,z^*_q \in \mathbb{Z}$, 
we have four cases: (i) $x^*_p < z^*_p$ and $x^*_q < z^*_q$, 
(ii) $x^*_p < z^*_p$ and $x^*_q > z^*_q$, 
(iii)  $x^*_p > z^*_p$ and $x^*_q < z^*_q$, or 
(iv)  $x^*_p > z^*_p$ and $x^*_q > z^*_q$.
Since these cases can be proven in similar ways, 
we only show the case of (i) $x^*_p < z^*_p$ and $x^*_q < z^*_q$, and 
omit the proof of the remaining cases.

In the following, 
we assume that $x^*_p < z^*_p$ and $x^*_q < z^*_q$.
Then we have $z_p \le z^*_p$ and $z_q \le z^*_q$ by the definition of $z$.
Let $x^*_p = \alpha_p + \beta_p$ and $x^*_q = \alpha_q + \beta_q$, where $\alpha_p,\alpha_q \in \mathbb{Z}$ and $0 < \beta_p,\beta_q < 1$.
Note that we have $z_p = \lceil x^*_p \rceil = x^*_p + 1 - \beta_p$ and $z_q = \lceil x^*_q \rceil = x^*_q + 1 - \beta_q$ by definition.

\subparagraph*{Case 3.1: $A_ix = x_p + x_q$.}
From $z_p \le z^*_p$ and $z_q \le z^*_q$, we have 
$z_p + z_q \le z^*_p + z^*_q$.
Hence, we have $A_iz \le A_iz^*$.

\subparagraph*{Case 3.2: $A_ix = x_p - x_q$.}
From $A_ix^* = b_i$, we have 
\begin{align}
x^*_p - x^*_q = \alpha_p - \alpha_q + \beta_p - \beta_q = b_i.
\end{align}
Since $b_i \in \mathbb{Z}$ and $-1 < \beta_p - \beta_q < 1$, 
we have $\beta_p - \beta_q = 0$.
Hence, we have 
\begin{align}
z_p - z_q = (x^*_p + 1 - \beta_p) - (x^*_q +1 - \beta_q) \\
= x^*_p - x^*_q - (\beta_p - \beta_q) = x^*_p - x^*_q = b_i.
\end{align}
Since $z^*_p - z^*_q \ge b_i$, 
we have $z_p - z_q \le z^*_p - z^*_q$, i.e., $A_iz \le A_iz^*$.

\subparagraph*{Case 3.3: $A_ix = -x_p + x_q$.}
We can show $A_iz \le A_iz^*$ in a similar way as in Case 3.2.

\subparagraph*{Case 3.4: $A_ix = -x_p - x_q$.}
Let $z^*_p = x^*_p + \gamma_p$ and $z^*_q = x^*_q + \gamma_q$, where $\gamma_p,\gamma_q > 0$.
Since $-x^*_p - x^*_q = b_i$ from $A_ix^* = b_i$, we have 
\begin{align}
-z^*_p - z^*_q = -(x^*_p + \gamma_p) - (x^*_q + \gamma_q) \\
= -x^*_p - x^*_q - (\gamma_p + \gamma_q) < b_i.
\end{align}
This contradicts that $z^*$ is feasible.
Hence, this case does not occur.
This completes the proof.
\end{proof}

From Claims~\ref{cl:z-feasibility} and \ref{cl:z-optimality}, 
we conclude that $z$ is an optimal integer solution of the ILO problem~\eqref{eq:ILO}.
This completes the proof of \cref{thm:main}.
\end{proof}

As we mentioned in Introduction, 
we can show a slightly weaker version of \cref{thm:main} using a general result on persistency in discrete convex analysis on graph structures~\cite{Hir18}.
A discussion on this is found in \cref{appendix:L-extendability}.

\begin{remark}
From (neighborhood) persistency, given an optimal solution of the LO relaxation~\eqref{eq:LO}, 
we can reduce the number of variables in the ILO problem\eqref{eq:ILO} problem by the number of variables that have integer values in the solution of the LO relaxation.
Hence, it is desirable to obtain an optimal solution of \eqref{eq:LO} in which the number of variables having integer values is maximum.
Here, we show that one can find in polynomial time an optimal solution of \eqref{eq:LO} in which the set of variables having integer values is \emph{maximal} as follows.
Let $x^*$ be an arbitrary optimal solution of \eqref{eq:LO}.
For each $j \in \{1,\dots, n\}$ with $x^*_j \in \mathbb{Z}$ fix the value of $x_j$ to $x^*_j$ to reduce the size of the LO problem.
Choose a $j \in \{1,\dots, n\}$ with $x^*_j \not \in \mathbb{Z}$ and 
solve two LO problems with the value of $x_j$ fixed to $\lfloor x^*_j \rfloor$ or $\lceil x^*_j \rceil$.
If the optimal value of one of the LO problems is the same as that of the original LO problem, then fix the value of $x_j$ accordingly.
This fixation is valid since if we have an optimal solution of the reduced LO problem where $x_j$ has an integer value, then we also have 
an optimal solution of the reduced LO problem where $x_j$ is either $\lfloor x^*_j \rfloor$ or $\lceil x^*_j \rceil$ by convexity of the set of optimal solutions of the reduced LO problem.
Repeating this process until no LO problem with a fixed variable has the same optimal value as the original one, 
we obtain an optimal solution of \eqref{eq:LO} in which the set of variables having integer values is maximal.
\end{remark}

\section{Maximality of UTVPI systems}\label{sec:maximality}
In this section, we show that ILO on UTVPI systems is a maximal subclass of ILO with its LO relaxation having (neighborhood) persistency in the following sense: 
If we allow (i) an inequality with \emph{three} variables whose coefficients are all one or 
(ii) an inequality with two variables whose coefficients are in $\{1,2\}$, 
then even persistency does not hold for the LO relaxation of the ILO problem~\eqref{eq:ILO}.

\begin{example}\label{example1}
Consider the following ILO problem:

\begin{align}\label{eq:example1}
\begin{array}{ll}
\text{minimize} & 3x_1+x_2\\
\text{subject to} & x_1 + x_2 + x_3 \ge 2,\\
& x_1 - x_3 \ge 0,\\
 & -x_2 \ge -1,\\
 & x \in \mathbb{Z}^3.
\end{array}
\end{align}

By using an (I)LO solver, 
one can check that $(x^*_1,x^*_2,x^*_3) = (0.5,1,0.5)$ is an optimal fractional solution of the LO relaxation of \eqref{eq:example1}, and 
$(z^*_1,z^*_2,z^*_3) = (1,0,1)$ is an optimal integer solution of \eqref{eq:example1}, whose objective value is 3.
On the other hand, if we fix $x_2 = x^*_2 = 1$ in \eqref{eq:example1}, 
then we obtain the following ILO problem:

\begin{align}\label{eq:example1-1}
\begin{array}{ll}
\text{minimize} & 3x_1+1\\
\text{subject to} & x_1 + x_3 \ge 1,\\
& x_1 - x_3 \ge 0,\\
 & x_1,x_3 \in \mathbb{Z}.
\end{array}
\end{align}

By using an ILO solver, 
one can check that the ILO problem~\eqref{eq:example1-1} has an optimal integer solution $(z^*_1,z^*_3) = (1,0)$ whose objective value is 4.
If the LO relaxation of the ILO problem~\eqref{eq:example1} has persistency, then the optimal values of the ILO problems~\eqref{eq:example1} and  \eqref{eq:example1-1} must be 
equal. 
However, they are different and 
we conclude that the LO relaxation of \eqref{eq:example1} does not have persistency.
\end{example}

\begin{example}\label{example2}
Consider the following ILO problem:

\begin{align}\label{eq:example2}
\begin{array}{ll}
\text{minimize} & 3x_1+x_2\\
\text{subject to} & 2x_1 + x_2 \ge 2,\\
& -x_2 \ge -1,\\
& x \in \mathbb{Z}^2.
\end{array}
\end{align}

By using an (I)LO solver, 
one can check that $(x^*_1,x^*_2) = (0.5,1)$ is an optimal fractional solution of the LO relaxation of \eqref{eq:example2}, and 
$(z^*_1,z^*_2) = (1,0)$ is an optimal integer solution of \eqref{eq:example2}, whose objective value is 3.
On the other hand, if we fix $x_2 = x^*_2 = 1$ in \eqref{eq:example2}, 
then we obtain the following ILO problem:

\begin{align}\label{eq:example2-1}
\begin{array}{ll}
\text{minimize} & 3x_1+1\\
\text{subject to} & 2x_1 \ge 1,\\
 & x_1 \in \mathbb{Z}.
\end{array}
\end{align}

The ILO problem~\eqref{eq:example2-1} has the unique optimal integer solution $z^*_1 = 1$ whose objective value is 4.
Since the optimal values of ILO problems~\eqref{eq:example2} and \eqref{eq:example2-1} differ, 
we conclude that the LO relaxation of \eqref{eq:example2} does not have persistency.
\end{example}

From Examples~\ref{example1} and \ref{example2}, together with Theorem~\ref{thm:main}, 
we see that ILO on UTVPI systems is a maximal subclass of ILO with its LO relaxation having (neighborhood) persistency.

From Examples~\ref{example1} and \ref{example2}, 
we also see that persistency does not hold even for the LO relaxation of \emph{binary} ILO when we allow one of the inequalities specified above.
Therefore, binary ILO on UTVPI systems is a maximal subclass of binary ILO with its LO relaxation having (neighborhood) persistency.

\section{Fixed-parameter tractability and two-approximability for special cases}
\label{sec:approx-FPT}
In this section, we consider ILO on UTVPI systems with non-negative objective functions and non-negative variables and
address
the following ILO problem: 
\begin{align}\label{eq:ILO2}
\begin{array}{ll}
\text{minimize} & w^\mathrm{T}x\\
\text{subject to} & Ax \ge b,\\
 & x \ge 0,\\
 & x \in \mathbb{Z}^n, 
\end{array}
\end{align}
where $A \in \{ -1,0,1 \}^{m \times n}$ is a matrix having at most two nonzero elements per row, 
$b \in \mathbb{Z}^m$, $w \in \mathbb{Q}^n_{+}$ (where $\mathbb{Q}_+$ denotes the set of non-negative rationals), and $m,n$ are positive integers.
We show that the ILO problem \eqref{eq:ILO2} is both fixed-parameter tractable and two-approximable in what follows.

From our main result (\cref{thm:main}) we can reduce solving the ILO problem \eqref{eq:ILO2} to solving an ILO problem with \emph{binary} variables, i.e., each variable takes value zero or one.
Indeed, let $x^*$ be an optimal fractional solution of the LO relaxation of \eqref{eq:ILO2} and let $\lfloor x^* \rfloor$ be a vector obtained from $x^*$ by taking componentwise $\lfloor \cdot \rfloor$.
Then from \cref{thm:main} the ILO problem \eqref{eq:ILO2} is equivalent to the following problem:
\begin{align}\label{eq:ILO3}
\begin{array}{ll}
\text{minimize} & (w')^\mathrm{T}x'+w^\mathrm{T}\lfloor x^* \rfloor\\
\text{subject to} & A'x'+A\lfloor x^* \rfloor \ge b,\\
 & x' \in \{0,1\}^{n-|I(x^*)|},
\end{array}
\end{align}
where $I(x^*) = \{ j\in \{1,\dots,n\} \mid x^*_j \in \mathbb{Z} \}$, 
and $w'$ (resp, $A'$) is a restriction of $w$ (resp., columns of $A$) to $\{1,\dots,n\} \setminus I(x^*)$.
In turn, the ILO problem \eqref{eq:ILO3} is equivalent to solving 
\begin{align}\label{eq:ILO4}
\begin{array}{ll}
\text{minimize} & (w')^\mathrm{T}x'\\
\text{subject to} & A'x' \ge b-A\lfloor x^* \rfloor,\\
 & x' \in \{0,1\}^{n-|I(x^*)|}.
\end{array}
\end{align}

The ILO problem \eqref{eq:ILO4} with binary variables is 
fixed-parameter tractable~\cite{MNR13}\footnote{ILO on UTVPI systems with non-negative objective functions and binary variables is equivalent to the weighted min one 2-SAT problem in \cite{MNR13}.} (i.e., there exists an algorithm that solves a problem with parameter $k$ in time $f(k)s^{{\rm O}(1)}$ where $s$ is the input size of the problem) and 
two-approximable~\cite{HMN93} (i.e., there exists a polynomial time algorithm that outputs a feasible solution (if it exists) where the objective value is at most twice the optimal value).
We show that we can obtain the same results for the (non-binary) ILO problem \eqref{eq:ILO2}.

Let $k$ be a positive integer.
From \cref{thm:main}, 
the ILO problem \eqref{eq:ILO2} has a solution whose objective value is at most $k$ if and only if  
the ILO problem \eqref{eq:ILO4} has a solution whose objective value is at most $k-w^T\lfloor x^* \rfloor (\le k)$.
Moreover, the optimal value of the LO relaxation of \eqref{eq:ILO2} is the sum of the optimal value of the LO relaxation of \eqref{eq:ILO4} and $w^T\lfloor x^* \rfloor$.
Consequently, the following results on fixed-parameter tractability hold from previous results \cite{MNR13}.
Notation ${\rm O}^*()$ hides functions that are polynomial in the input size in what follows.

\begin{theorem}
Given an ILO problem \eqref{eq:ILO2} such that $w_j \ge 1$ for all $j \in \{1,\dots,n\}$ and positive integer $k$,
it can be verified in time ${\rm O}^*(1.3788^k)$ if \eqref{eq:ILO2} has a feasible solution whose objective value is at most $k$.
An optimal integer solution (if it exists) can be obtained in time ${\rm O}^*(1.2377^n)$.
\end{theorem}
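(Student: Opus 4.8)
The plan is to obtain the theorem directly from the reduction of \eqref{eq:ILO2} to the binary ILO problem \eqref{eq:ILO4} established above, and then to invoke the parameterized and exact exponential-time algorithms of \cite{MNR13} for weighted min-ones $2$-SAT. First I would note that the LO relaxation of \eqref{eq:ILO2} is feasible whenever \eqref{eq:ILO2} is (feasibility of the integer problem implies feasibility of the relaxation) and is always bounded, since $w \ge 0$ and $x \ge 0$ force the objective to be non-negative; hence an optimal fractional solution $x^*$ exists, and, as noted in the Introduction, a half-integral one can be computed in polynomial time by transforming the system to a DCS and running a minimum-cost flow algorithm~\cite{HMN93}. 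Fixing the integer coordinates of $x^*$ yields \eqref{eq:ILO4}, whose constraint matrix is a $\{-1,0,1\}$-UTVPI matrix on $\{0,1\}$-variables and whose objective has non-negative coefficients; this is precisely an instance of weighted min-ones $2$-SAT, and the constant offset $w^{\mathrm T}\lfloor x^* \rfloor$ is non-negative because $x^* \ge 0$ and $w \ge 0$.

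Next I would track the parameter. By \cref{thm:main}, \eqref{eq:ILO2} has a feasible solution of objective value at most $k$ if and only if \eqref{eq:ILO4} has one of value at most $k' := k - w^{\mathrm T}\lfloor x^* \rfloor \le k$; if $k' < 0$ we may immediately answer ``no''. Since every $w'_j \ge 1$, any binary feasible solution of \eqref{eq:ILO4} of value at most $k'$ sets at most $k' \le k$ of the variables to $1$, so the relevant solution size is bounded by $k$. Therefore the ${\rm O}^*(1.3788^k)$ algorithm of \cite{MNR13} deciding whether weighted min-ones $2$-SAT has a solution of weight at most a given bound applies, after the polynomial-time computation of $x^*$ and the polynomial-time construction of \eqref{eq:ILO4}, giving the first claim. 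For the second claim, the number of variables of \eqref{eq:ILO4} is $n - |I(x^*)| \le n$, so the exact exponential-time algorithm of \cite{MNR13} for min-ones $2$-SAT, running in time ${\rm O}^*(1.2377^{\,n - |I(x^*)|}) \subseteq {\rm O}^*(1.2377^{\,n})$, produces an optimal binary solution of \eqref{eq:ILO4}, which by \cref{thm:main} extends, by adding back $\lfloor x^* \rfloor$, to an optimal integer solution of \eqref{eq:ILO2} (and infeasibility of \eqref{eq:ILO2}, i.e.\ infeasibility of the LO relaxation or of \eqref{eq:ILO4}, is detected along the way).

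The routine but necessary verifications are: that feasibility status and optimal value are preserved by the reduction in both directions (immediate from \cref{thm:main} and the definitions of $A'$, $w'$, and the offset $w^{\mathrm T}\lfloor x^* \rfloor$); that $\{-1,0,1\}$-UTVPI constraints over $\{0,1\}$-variables are exactly $2$-SAT clauses, so that the results of \cite{MNR13} are directly applicable; and that all non-algorithmic transformations (forming \eqref{eq:ILO3}, \eqref{eq:ILO4}, computing $\lfloor x^* \rfloor$) run in polynomial time so as not to disturb the stated running times. I do not expect a genuine obstacle here; the only mild subtlety is the bookkeeping of the non-negative offset $w^{\mathrm T}\lfloor x^* \rfloor$, which is exactly what keeps the threshold passed to the min-ones $2$-SAT solver at most $k$ (rather than something larger, which would degrade the ${\rm O}^*(1.3788^k)$ bound).
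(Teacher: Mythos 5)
Your proposal is correct and follows essentially the same route as the paper: use Theorem~\ref{thm:main} to reduce \eqref{eq:ILO2} to the binary problem \eqref{eq:ILO4}, observe that this is an instance of weighted min-ones 2-SAT with threshold $k-w^{\mathrm T}\lfloor x^*\rfloor\le k$ and at most $n$ variables, and invoke the algorithms of \cite{MNR13}. The extra bookkeeping you supply (boundedness of the relaxation, polynomial-time computation of a half-integral $x^*$, the non-negative offset, and infeasibility detection) is exactly the routine part the paper leaves implicit.
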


When $w$ is an all-one vector (i.e., $w_j = 1$ for all $j \in \{1,\dots,n\}$), 
we obtain the following:

\begin{theorem}
Let $I$ be an ILO problem \eqref{eq:ILO2} such that $w_j = 1$ for all $j \in \{1,\dots,n\}$ and $k$ be a positive integer.
Then 
\begin{itemize}
\item it can be checked in time ${\rm O}^*(1.2738^k)$ whether $I$ has a feasible solution whose objective value is at most $k$.
An optimal integer solution (if it exists) can be obtained in time ${\rm O}^*(1.2114^n)$ and polynomial space, or ${\rm O}^*(1.2108^n)$ and exponential space; 
\item it can be checked in time ${\rm O}^*(2.3146^{k-{\rm OPT}_{\rm LO}})$ whether $I$ has a feasible solution whose objective value is at most $k$ where ${\rm OPT}_{\rm LO}$ is the optimal value of the LO relaxation of $I$; 
\item there exists a randomized polynomial time algorithm that produces ILO problem $I'$ on a UTVPI system with an all-one objective function vector and binary variables, and $k'$ such that $I'$ has a number of variables and inequalities polynomial in $k-{\rm OPT}_{\rm LO}$ and 
if $I$ has a feasible solution whose objective value is at most $k$, then $I'$ has a feasible solution whose objective value is at most $k'$, and 
if $I$ has no feasible solution with the objective value at most $k$, then with the probability at least half, $I'$ has no feasible solution whose objective value is at most $k'$.
\end{itemize}
\end{theorem}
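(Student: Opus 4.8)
The reduction of \eqref{eq:ILO2} to the binary problem \eqref{eq:ILO4} has been established above, along with the facts that a feasible solution of \eqref{eq:ILO2} of objective value at most $k$ corresponds to a feasible solution of \eqref{eq:ILO4} of objective value at most $\ell := k - w^\mathrm{T}\lfloor x^*\rfloor$, and that the LO relaxation of \eqref{eq:ILO4} has optimal value ${\rm OPT}_{\rm LO} - w^\mathrm{T}\lfloor x^*\rfloor$; the plan is therefore to feed \eqref{eq:ILO4} into the algorithms of \cite{MNR13}. First observe that \eqref{eq:ILO4} is a binary ILO on a UTVPI system --- the matrix $A'$ consists of columns of $A$, so it has at most two $\pm1$-entries per row, and $b - A\lfloor x^*\rfloor$ is integral --- and, since its objective is all-one, it is (by the footnote above) an instance of weighted min one 2-SAT with unit weights, i.e.\ \textsc{Min Ones 2-SAT}: over $\{0,1\}$-variables each UTVPI inequality is vacuous, forces a variable, or is a $2$-clause, and $\sum_j x'_j$ counts the true variables. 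Also, $x^*$ (a half-integral LO optimum of \eqref{eq:ILO2}) is computable in polynomial time as recalled in the introduction, and \eqref{eq:ILO2}, whose constraint system including $x \ge 0$ is UTVPI, can be tested for feasibility in polynomial time; if \eqref{eq:ILO2} is infeasible, or if $\ell < 0$ (which forces the answer ``no'', since $w^\mathrm{T}\lfloor x^*\rfloor \le {\rm OPT}_{\rm LO}$, which is at most the optimal integer value of $I$), the output is immediate.

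For the first item I would run on \eqref{eq:ILO4} the ${\rm O}^*(1.2738^{\ell})$ decision algorithm of \cite{MNR13} with budget $\ell \le k$, and its ${\rm O}^*(1.2114^{n'})$ polynomial-space and ${\rm O}^*(1.2108^{n'})$ exponential-space exact algorithms, where $n' = n - |I(x^*)| \le n$; adding $\lfloor x^*\rfloor$ back recovers an optimal integer solution of \eqref{eq:ILO2}, and $\ell \le k$ and $n' \le n$ yield the stated running times. For the second item the key observation is that the above-LP gap is preserved \emph{exactly}: $\ell - ({\rm OPT}_{\rm LO} - w^\mathrm{T}\lfloor x^*\rfloor) = (k - w^\mathrm{T}\lfloor x^*\rfloor) - ({\rm OPT}_{\rm LO} - w^\mathrm{T}\lfloor x^*\rfloor) = k - {\rm OPT}_{\rm LO}$, so the above-LP algorithm of \cite{MNR13} applied to \eqref{eq:ILO4} runs in ${\rm O}^*(2.3146^{k - {\rm OPT}_{\rm LO}})$. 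For the third item the same identity makes the randomized polynomial kernelization of \cite{MNR13}, applied to \eqref{eq:ILO4} with budget $\ell$, output an instance of size polynomial in $k - {\rm OPT}_{\rm LO}$; that instance is itself an ILO $I'$ on a UTVPI system with all-one objective and binary variables, it comes with a budget $k'$, and it is yes-preserving and no-preserving with probability at least $1/2$, which --- composed with the equivalence of $I$ and \eqref{eq:ILO4} --- is exactly the asserted property.

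The one step that needs attention, and the nearest thing here to an obstacle, is verifying that the LO relaxation used throughout this paper coincides with the LP relaxation on which the above-LP and kernelization results of \cite{MNR13} are built: the natural relaxation of a binary ILO on a UTVPI system carries the box constraints $0 \le x' \le 1$ (again UTVPI), has a half-integral optimum, and is precisely the standard LP relaxation of weighted min one 2-SAT, so ${\rm OPT}_{\rm LO}$ is indeed the quantity to subtract. Beyond this there is no deeper difficulty: the algorithmic content is imported verbatim from \cite{MNR13}, and \cref{thm:main} is exactly what licenses replacing the unbounded-integer problem \eqref{eq:ILO2} by its binary counterpart \eqref{eq:ILO4} without affecting feasibility, the optimal value up to the additive constant $w^\mathrm{T}\lfloor x^*\rfloor$, or the above-LP parameter $k - {\rm OPT}_{\rm LO}$.
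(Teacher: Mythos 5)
Your proposal matches the paper's own argument: the paper proves this theorem exactly by the reduction of \eqref{eq:ILO2} to the binary problem \eqref{eq:ILO4} via \cref{thm:main}, noting that the budget shifts to $k-w^\mathrm{T}\lfloor x^*\rfloor$ and that the above-LP gap $k-{\rm OPT}_{\rm LO}$ is preserved, and then importing the decision, exact, above-LP, and randomized kernelization algorithms of \cite{MNR13} for min ones 2-SAT. Your additional checks (infeasibility, negative budget, identification of binary UTVPI instances with min ones 2-SAT as in the paper's footnote) are consistent with, and slightly more explicit than, the paper's presentation.
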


For approximability, we obtain the following:

\begin{theorem}
ILO on UTVPI systems with non-negative objective functions and non-negative variables is 2-approximable.
\end{theorem}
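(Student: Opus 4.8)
The plan is to reuse the reduction already set up in this section: by neighborhood persistency (\cref{thm:main}) the problem \eqref{eq:ILO2} is equivalent to the binary problem \eqref{eq:ILO4}, which is a binary ILO on a UTVPI system with a non-negative objective, and such problems are $2$-approximable by Hochbaum et al.~\cite{HMN93}. So I would run the algorithm of \cite{HMN93} on \eqref{eq:ILO4}, lift the resulting binary solution back to a solution of \eqref{eq:ILO2} by adding $\lfloor x^* \rfloor$, and check that the additive constant $w^{\mathrm T}\lfloor x^* \rfloor$ introduced by the reduction does not damage the approximation ratio. The latter check is essentially the only content of the proof.

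First I would make sure all the objects are well-defined when \eqref{eq:ILO2} is feasible. Since $w \ge 0$ and the constraints of \eqref{eq:ILO2} include $x \ge 0$, the LO relaxation of \eqref{eq:ILO2} is feasible and bounded below by $0$, hence has an optimal fractional solution $x^*$, and $x^* \ge 0$ forces $\lfloor x^* \rfloor \ge 0$. Such an $x^*$ is computable in polynomial time (as noted in the introduction, e.g.\ by transforming to an ILO on a DCS and running a minimum cost flow algorithm, or simply by solving the linear program). By \cref{thm:main} the problem \eqref{eq:ILO2} is equivalent to \eqref{eq:ILO3}, hence to \eqref{eq:ILO4}; in particular \eqref{eq:ILO4} is feasible, and, writing ${\rm OPT}$ and ${\rm OPT}'$ for the optimal values of \eqref{eq:ILO2} and \eqref{eq:ILO4}, we have ${\rm OPT} = {\rm OPT}' + w^{\mathrm T}\lfloor x^* \rfloor$.

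Next I would apply the $2$-approximation algorithm of \cite{HMN93} to the binary problem \eqref{eq:ILO4} to obtain, in polynomial time, a feasible $\bar x' \in \{0,1\}^{n - |I(x^*)|}$ with $(w')^{\mathrm T}\bar x' \le 2\,{\rm OPT}'$. Let $\bar x \in \mathbb{Z}^n$ be the extension of $\bar x'$ by $\lfloor x^* \rfloor$, i.e.\ $\bar x_j = x^*_j$ for $j \in I(x^*)$ and $\bar x_j = \bar x'_j$ otherwise. Then $A'\bar x' \ge b - A\lfloor x^* \rfloor$ gives $A\bar x \ge b$, and $\bar x \ge 0$ since $\bar x' \ge 0$ and $\lfloor x^* \rfloor \ge 0$; thus $\bar x$ is a feasible solution of \eqref{eq:ILO2}. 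Its objective value satisfies
\[
w^{\mathrm T}\bar x = (w')^{\mathrm T}\bar x' + w^{\mathrm T}\lfloor x^* \rfloor \le 2\,{\rm OPT}' + w^{\mathrm T}\lfloor x^* \rfloor = 2\,{\rm OPT} - w^{\mathrm T}\lfloor x^* \rfloor \le 2\,{\rm OPT},
\]
where the last inequality uses $w \ge 0$ and $\lfloor x^* \rfloor \ge 0$. Every step runs in polynomial time, so this yields the claimed $2$-approximation. The only real subtlety is the final displayed line: the dropped term $w^{\mathrm T}\lfloor x^* \rfloor$ must be non-negative, which is precisely where the hypotheses $w \in \mathbb{Q}_+^n$ and $x \ge 0$ are used; without them the additive constant of the reduction could be negative and this argument would give a strictly weaker bound.
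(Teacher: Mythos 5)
Your proof is correct and follows essentially the same route as the paper: reduce \eqref{eq:ILO2} to the binary problem \eqref{eq:ILO4} via \cref{thm:main}, run the $2$-approximation algorithm of \cite{HMN93} on it, lift back by adding $\lfloor x^* \rfloor$, and absorb the additive term using $w^{\mathrm T}\lfloor x^* \rfloor \ge 0$ (exactly the step $w^{\mathrm T}\lfloor x^* \rfloor \le 2w^{\mathrm T}\lfloor x^* \rfloor$ in the paper). One slip of the pen: in your definition of $\bar x$ the clause ``$\bar x_j=\bar x'_j$ otherwise'' should read $\bar x_j=\lfloor x^*_j\rfloor+\bar x'_j$ for $j\notin I(x^*)$, which is the lift your feasibility and objective computations actually use.
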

\begin{proof}
Let ${\rm OPT}_{1}$ (resp., ${\rm OPT}_{2}$) be the optimal value of the ILO problem \eqref{eq:ILO2} (resp., \eqref{eq:ILO4}).
Let $x'$ be a two-approximate solution of the ILO problem \eqref{eq:ILO4} obtained by a previously proposed algorithm~\cite{HMN93}.
Define $x \in \mathbb{Z}^n$ as $x_j=\lfloor x^*_j \rfloor + x'_j$ if $j \not\in I(x^*)$ and $x_j=\lfloor x^*_j \rfloor$ otherwise (i.e. if $j \in I(x^*)$).
Then $x$ is a two-approximate solution of the ILO problem \eqref{eq:ILO2}, since 
\begin{align}
w^Tx &= w^T \lfloor x^* \rfloor + (w')^Tx' \\
&\le w^T \lfloor x^* \rfloor + 2{\rm OPT}_2\\
&\le 2w^T \lfloor x^* \rfloor + 2{\rm OPT}_2\\
&= 2{\rm OPT}_1, 
\end{align}
where $w^T \lfloor x^* \rfloor + {\rm OPT}_2 = {\rm OPT}_1$ from \cref{thm:main}.
This completes the proof.
\end{proof}

Two-approximability of ILO on UTVPI systems with non-binary variables is already known~\cite{HMN93}.
Therefore, we obtain another proof of the fact using neighborhood persistency.

\section{Conclusion}\label{sec:conclusion}
We introduced neighborhood persistency of the linear optimization (LO) relaxation of integer linear optimization (ILO), 
which is a property stronger than persistency, 
and show that ILO on unit-two-variable-per-inequality (UTVPI) systems is a maximal subclass of ILO with its LO relaxation having (neighborhood) persistency.
Our persistency result generalizes known results on special cases of ILO on UTVPI systems~\cite{NeT75,HMN93,FJW21}.
Using neighborhood persistency, we obtain fixed-parameter algorithms (where the parameter is the solution size) and another proof of the two-approximability for special cases of ILO on UTVPI systems.
An interesting future direction will be to find a (maximal) subclass of ILO with its LO relaxation having (neighborhood) persistency that is incomparable to ILO on UTVPI systems.
Future works will also include generalizations of our result to nonlinear objective functions.

%%
%% Bibliography
%%

%% Please use bibtex, 

%\bibliography{UTVPI_persistency}
%\bibliographystyle{plainurl}% the mandatory bibstyle
\providecommand{\noopsort}[1]{}

\appendix

\section{A proof of a variant of Theorem~\ref{thm:main}}
\label{appendix:L-extendability}
Here, we reveal a connection between ILO on UTVPI systems and discrete convex analysis on graph structures initiated by Hirai~\cite{Hir18}, and show the following theorem, which is slightly weaker than our main theorem (\cref{thm:main}) but still a generalization of the persistency results in~\cite{NeT75,HMN93,FJW21}.
For vector $x \in \mathbb{R}^n$ define its \emph{$\frac{1}{2}$-neighborhood} $N_{\frac{1}{2}}(x) \subseteq \mathbb{Z}^n$ as the set of integer vectors whose $\ell_{\infty}$ distance from $x$ is at most one half, 
i.e., $N_{\frac{1}{2}}(x)=\{ z \in \mathbb{Z}^n \mid |z_j -x_j| \le \frac{1}{2} \ (j=1,\dots, n) \}$.
Define $\frac{1}{2}\mathbb{Z} = \{ \frac{k}{2} \mid k \in \mathbb{Z} \}$.
Any vector in $(\frac{1}{2}\mathbb{Z})^n$ is called \emph{half-integral}.

\begin{theorem}
\label{thm:main-alt}
Assume that integer linear optimization~\eqref{eq:ILO} is feasible.
For every half-integral optimal solution $x^*$ of the linear optimization relaxation \eqref{eq:LO} 
there exists an optimal integer solution of \eqref{eq:ILO} in $N_{\frac{1}{2}}(x^*)$.
\end{theorem}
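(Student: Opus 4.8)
The plan is to derive \cref{thm:main-alt} from Hirai's persistency theorem for L-convex functions on graph structures~\cite{Hir18}, by exhibiting ILO~\eqref{eq:ILO} on a UTVPI system as the minimization of such a function and then reading off the conclusion at a half-integral point.

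First I would set up the graph structure. Since \eqref{eq:ILO} is feasible and (in the relevant case) bounded, I may clip each variable to a large finite interval of integers without changing the optimal value or the optimal solutions near a fixed $x^*$. Take $\Gamma$ to be built from one path per variable, the $j$-th path having as vertices the admissible integers for $x_j$; its geometric realization $|\Gamma|$ is then a box $B\subseteq\mathbb{R}^n$, carrying a natural cell decomposition whose vertices are the integer points. The content of this step is to verify that the function $g$ on the vertex set of $\Gamma$ defined by $g(x)=w^\mathrm{T}x$ on $\{x:Ax\ge b\}$ and $g(x)=+\infty$ otherwise is L-convex on $\Gamma$. I would do this building block by building block: a linear function is L-convex; a single-variable constraint merely restricts a path; and a two-variable UTVPI constraint $\sigma x_p+\tau x_q\ge b_i$, after reflecting whichever of the coordinates $p,q$ occurs with a minus sign, becomes a monotone difference inequality whose indicator is one of the standard L-convex building blocks. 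Hirai's framework is exactly what lets these per-constraint sign choices coexist inside a single $\Gamma$; L-convexity of the sum $g$ then follows from closure of the class under addition.

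Second, I would identify the LP relaxation~\eqref{eq:LO} (restricted to $B$) with Hirai's continuous relaxation $\bar g$ of $g$ on $|\Gamma|$: both minimize the same linear form, and the effective domain of $\bar g$ coincides with $\{p\in B:Ap\ge b\}$. Hence a half-integral optimal solution $x^*$ of \eqref{eq:LO} is a minimizer of $\bar g$ sitting at a $\tfrac12\mathbb{Z}^n$-point of $|\Gamma|$. Hirai's persistency theorem then yields an integer (vertex-of-$\Gamma$) minimizer $z^*$ of $g$ lying in the minimal cell of $|\Gamma|$ containing $x^*$. Because $x^*$ is half-integral, that cell is contained in $\prod_j[\lfloor x^*_j\rfloor,\lceil x^*_j\rceil]$, whose integer points are exactly $N_{\frac12}(x^*)$; since $z^*$ minimizes $g$, it is an optimal integer solution of \eqref{eq:ILO}, and $z^*\in N_{\frac12}(x^*)$, as required.

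The main obstacle is the first step: correctly formulating Hirai's L-convexity on the product-of-paths structure and checking that every UTVPI inequality — in particular $x_p+x_q\ge b_i$, which is \emph{not} discrete-midpoint-convex with respect to the naive $\mathbb{Z}^n$ grid — gives an L-convex indicator once its sign pattern is absorbed into $\Gamma$, as well as confirming that Hirai's persistency statement is precisely of the ``integer minimizer inside the cell of the fractional minimizer'' form needed to land in $N_{\frac12}(x^*)$. (Much more cheaply, \cref{thm:main-alt} is in fact immediate from \cref{thm:main}, since the only integers within distance $<1$ of a half-odd-integer are at distance exactly $\tfrac12$; the purpose of this appendix is the connection to discrete convex analysis rather than the statement itself.)
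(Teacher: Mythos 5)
Your overall strategy is the same as the paper's appendix proof: realize \eqref{eq:ILO} as minimizing over $\mathbb{Z}^n$ the sum $h$ of the linear objective and the indicator functions of the individual constraints, show that $h$ admits an L-convex relaxation in Hirai's sense, and invoke his persistency theorem at the half-integral minimizer $x^*$. However, the step you defer as ``the main obstacle'' is precisely where the paper's proof does its work: Lemma~\ref{lem:UTVPI-L-convex} verifies, by explicit case analysis using the away-from-integer/toward-integer roundings on $\frac{1}{4}\mathbb{Z}$, that the natural half-integral extension of the indicator of $\sigma x_p+\tau x_q\ge\beta$ satisfies the discrete midpoint inequality of Definition~\ref{def:L-convexity-on-half-integers} for \emph{every} sign pattern, including $x_p+x_q\ge\beta$; Lemma~\ref{lem:linear-L-convex} handles the linear part, and the discrete-image hypothesis of Theorem~\ref{thm:L-persistency} is checked before the theorem is applied. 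Your reflection heuristic (absorb minus signs into $\Gamma$ and reduce to monotone difference constraints) is plausible and could likely be made rigorous, since coordinate negation is an automorphism of the oriented graph on $\frac{1}{2}\mathbb{Z}$; but as written you prove neither that difference-constraint indicators are L-convex on this structure nor that per-constraint sign choices are legitimate when the same variable appears with different signs in different constraints. Saying that ``Hirai's framework is exactly what lets these sign choices coexist'' states the conclusion to be proved rather than an argument, so the central lemma of the proof is missing.

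A second divergence: you apply Hirai's theorem to a continuous relaxation $\bar g$ on the geometric realization of a clipped product of paths and assert that its effective domain is $\{p : Ap\ge b\}$; this identification is neither justified nor needed. The paper instead uses the half-integral relaxation $g$ on $(\frac{1}{2}\mathbb{Z})^n$ and only needs the easy observation that a half-integral optimum $x^*$ of \eqref{eq:LO} is a minimizer of $g$, after which Theorem~\ref{thm:L-persistency} directly yields a minimizer of $h$ in $N_{\frac{1}{2}}(x^*)$; the clipping to a finite box is likewise unnecessary. Finally, your closing parenthetical is correct: for half-integral $x^*$ one has $N(x^*)=N_{\frac{1}{2}}(x^*)$, so \cref{thm:main-alt} follows immediately and non-circularly from \cref{thm:main}. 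That one-liner does prove the statement, but it bypasses the appendix's actual purpose, which is to derive it from the discrete-convex-analysis machinery of \cite{Hir18}; as a reconstruction of that route, your proposal leaves its key verification open.
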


Note that \cref{thm:main-alt} is weaker than \cref{thm:main} since half integrality of the optimal solution of the linear optimization relaxation is assumed as in~\cite{NeT75,HMN93,FJW21}.

We show \cref{thm:main-alt} using a result on persistency in discrete convex analysis on graph structures~\cite{Hir18}.
The result is on functions defined on a general graph class and their relaxations.
However, we only need a result specialized to functions on $\mathbb{Z}^n$ (regarded as a graph) and their relaxations.
The specialized result roughly says that 
if a function on $\mathbb{Z}^n$ has a nice relaxation on $(\frac{1}{2}\mathbb{Z})^n$, then for any minimizer $x^*$ of the relaxation there exists a minimizer of the function in the $\frac{1}{2}$-neighborhood of $x^*$.
Using this result we can show \cref{thm:main-alt} as follows.
First we observe that solving the ILO problem~\eqref{eq:ILO} is equivalent to minimizing a function $h$ on $\mathbb{Z}^n$ that is the sum of the objective function and the indicator function of each constraint in the ILO problem~\eqref{eq:ILO}.
The function $h$ is shown to have a nice relaxation $g$, which is obtained by naturally extending $h$ to half-integral vectors.
Since any half-integral optimal solution $x^*$ to the LO relaxation~\eqref{eq:LO} is a minimizer of $g$, 
we obtain \cref{thm:main-alt} by the specialized result on persistency.
We state this formally below.

The nice relaxation mentioned above is defined by L-convex functions on $(\frac{1}{2}\mathbb{Z})^n$~\cite{Hir18}.
To define L-convex functions on $(\frac{1}{2}\mathbb{Z})^n$, 
we regard it as a directed graph.
Namely, we regard $\frac{1}{2}\mathbb{Z}$ as a directed graph, where the vertex set is $\frac{1}{2}\mathbb{Z}$ and 
the arc set is $\{ (x,y) \in \frac{1}{2}\mathbb{Z} \times \frac{1}{2}\mathbb{Z} \mid |x-y|=\frac{1}{2}, x \in \mathbb{Z} \}$.
For example, we have arcs $(1,1.5)$ and $(1,0.5)$.
Then $(\frac{1}{2}\mathbb{Z})^n$ denotes the directed graph defined as the Cartesian product of $n$ copies of $\frac{1}{2}\mathbb{Z}$ in what follows.

To define L-convex functions on $(\frac{1}{2}\mathbb{Z})^n$, we also need flooring and ceiling functions defined along the edge directions in directed graph $(\frac{1}{2}\mathbb{Z})^n$. 
For $x \in \frac{1}{4}\mathbb{Z}$, we define $\lfloor x \rfloor \in \frac{1}{2}\mathbb{Z}$ and $\lceil x \rceil \in \frac{1}{2}\mathbb{Z}$ as 
\begin{align*}
\lfloor x \rfloor = 
\begin{cases}
x & \mbox{if $x \in \frac{1}{2}\mathbb{Z}$}\\
x+\frac{1}{4} & \mbox{if $x-\frac{1}{4} \in \mathbb{Z}$}\\
x-\frac{1}{4} & \mbox{if $x+\frac{1}{4} \in \mathbb{Z}$}, 
\end{cases}
\end{align*}
and 
\begin{align*}
\lceil x \rceil = \begin{cases}
x & \mbox{if $x \in \frac{1}{2}\mathbb{Z}$}\\
x-\frac{1}{4} & \mbox{if $x-\frac{1}{4} \in \mathbb{Z}$}\\
x+\frac{1}{4} & \mbox{if $x+\frac{1}{4} \in \mathbb{Z}$}.
\end{cases}
\end{align*}
Hence, $\lfloor \cdot \rfloor$ (resp., $\lceil \cdot \rceil$) rounds a non half-integral value away from (resp., towards) integers.
For vector $x \in (\frac{1}{4}\mathbb{Z})^n$, $\lfloor x \rfloor$ (resp., $\lceil x \rceil$) is defined as the vector obtained by applying $\lfloor \cdot \rfloor$ (resp., $\lceil \cdot \rceil$) componentwise.
Note that $\lfloor \cdot \rfloor$ and $\lceil \cdot \rceil$ are different from those used in the body of this paper.

From Theorem 4.5 in \cite{Hir18}, we can define L-convex functions on $(\frac{1}{2}\mathbb{Z})^n$ in several equivalent ways, 
and we choose the following definition.
Let $\infty$ denote the infinity element treated as $\alpha + \infty = \infty$ and $\alpha < \infty$ for $\alpha \in \mathbb{R}$, and $\infty + \infty = \infty$.
Let $\overline{\mathbb{R}} := \mathbb{R} \cup \{\infty\}$.

\begin{definition}
\label{def:L-convexity-on-half-integers}
A function $g:(\frac{1}{2}\mathbb{Z})^n \rightarrow \overline{\mathbb{R}}$ is called \emph{L-convex} if 
\begin{align}\label{eq:L-convex-definition}
g(x) + g(y) \ge g\left(\left\lfloor \frac{x+y}{2} \right\rfloor \right) + g\left(\left\lceil \frac{x+y}{2} \right\rceil \right)
\end{align}
for each $x,y \in (\frac{1}{2}\mathbb{Z})^n$.
\end{definition}

Using L-convex functions, we define functions that have nice relaxations.

\begin{definition}
\label{def:L-extendability-on-integers}
A function $h:\mathbb{Z}^n \rightarrow \overline{\mathbb{R}}$ is called \emph{L-extendable} if there exists an L-convex function $g:(\frac{1}{2}\mathbb{Z})^n \rightarrow \overline{\mathbb{R}}$ such that the restriction of $g$ to $\mathbb{Z}^n$ coincides with $h$.
Then $g$ is called an L-convex relaxation of $h$.
\end{definition}

Now, we formally state the persistency result in \cite{Hir18} specialized to functions on $\mathbb{Z}^n$.
A function $g:(\frac{1}{2}\mathbb{Z})^n \rightarrow \overline{\mathbb{R}}$ has \emph{discrete image} if there is $\varepsilon > 0$ such that $[g(x)-\varepsilon, g(x)+\varepsilon] \cap g((\frac{1}{2}\mathbb{Z})^n) = \{g(x)\}$ for every $x$ with $g(x) < \infty$.
\begin{theorem}[Theorem 4.4 in \cite{Hir18} specialized to functions on $\mathbb{Z}^n$]\label{thm:L-persistency}
Let $h:\mathbb{Z}^n \rightarrow \overline{\mathbb{R}}$ be an L-extendable function and $g:(\frac{1}{2}\mathbb{Z})^n \rightarrow \overline{\mathbb{R}}$ an L-convex relaxation of $h$.
Suppose that $g$ has discrete image.
For any minimizer $x^*$ of $g$ (over $(\frac{1}{2}\mathbb{Z})^n$) there exists a minimizer of $h$ (over $\mathbb{Z}^n$) in $N_{\frac{1}{2}}(x^*)$.
\end{theorem}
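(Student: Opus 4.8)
The plan is to obtain \cref{thm:L-persistency} as the specialization of the general persistency theorem (Theorem~4.4 in \cite{Hir18}) to the directed graph $(\frac{1}{2}\mathbb{Z})^n$ introduced above, so that the work is to check that our setting is a genuine instance of Hirai's framework rather than to reprove persistency from scratch. Since the statement is already phrased as ``Theorem~4.4 in \cite{Hir18} specialized to functions on $\mathbb{Z}^n$,'' the content of the proof is a careful dictionary between the notions defined in \cref{def:L-convexity-on-half-integers,def:L-extendability-on-integers} and those of \cite{Hir18}.

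I would proceed in three verification stages. \textbf{Graph structure:} confirm that $\frac{1}{2}\mathbb{Z}$, with vertex set $\frac{1}{2}\mathbb{Z}$ and arcs from each integer to its two half-integer neighbors, is one of the graph structures on which Hirai's L-convexity is defined, and that the $n$-fold Cartesian product $(\frac{1}{2}\mathbb{Z})^n$ stays in the admissible class. Here one checks that the arc-directed operators $\lfloor\cdot\rfloor,\lceil\cdot\rceil$ on $\frac{1}{4}\mathbb{Z}$ defined above are exactly the roundings used to form midpoints in \cite{Hir18}, so that the midpoint map $(x,y)\mapsto\bigl(\lfloor\frac{x+y}{2}\rfloor,\lceil\frac{x+y}{2}\rceil\bigr)$ agrees with his. \textbf{L-convexity matches:} verify that \cref{def:L-convexity-on-half-integers}, i.e.\ inequality~\eqref{eq:L-convex-definition}, coincides with Hirai's definition of an L-convex function on this graph; this is precisely where Theorem~4.5 of \cite{Hir18} is invoked, giving the equivalence of several formulations and licensing the discrete-midpoint-convexity form. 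Then confirm that ``L-extendable'' and ``L-convex relaxation'' (\cref{def:L-extendability-on-integers}) are the instances of Hirai's notions for $\mathbb{Z}^n\subseteq(\frac{1}{2}\mathbb{Z})^n$, and that the discrete-image hypothesis is stated identically. \textbf{Invoke the theorem:} with the dictionary in place, apply Theorem~4.4 of \cite{Hir18} verbatim, so that a minimizer $x^*$ of $g$ admits a minimizer of $h$ within the arc-distance-$\frac{1}{2}$ ball, which is exactly $N_{\frac{1}{2}}(x^*)$ by definition of the $\frac{1}{2}$-neighborhood.

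The main obstacle is entirely in the first two stages, the definitional matching, and not in new mathematics: one must be careful that the nonstandard roundings used here (which send a quarter-integer \emph{away from} or \emph{toward} the integers, opposite in flavor to the $\lfloor\cdot\rfloor,\lceil\cdot\rceil$ of the body) genuinely reproduce Hirai's arc-directed midpoint, and that $(\frac{1}{2}\mathbb{Z})^n$ really is an admissible product graph. I would also flag why a self-contained combinatorial argument is delicate, to justify routing through \cite{Hir18}: persistency here is genuinely \emph{global}, because an L-convex relaxation can take values strictly below every integer point (one may have $\min g<\min h$), so no naive coordinatewise rounding of $x^*$ need be an integer minimizer. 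What makes the proximity claim go through is the discrete-image condition together with the structure of the minimizer set of $g$; the latter is closed under midpoint rounding, which follows immediately from \eqref{eq:L-convex-definition} since for two minimizers the inequality is forced to hold with equality. Reconstructing the descent/minimizer-set machinery that exploits these facts is exactly what citing Theorem~4.4 of \cite{Hir18} allows us to avoid.
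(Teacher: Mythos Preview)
Your proposal is correct and matches the paper's approach exactly: the paper does not prove \cref{thm:L-persistency} at all but simply states it as a specialization of Theorem~4.4 in \cite{Hir18}, so the only content is the dictionary between the local definitions and Hirai's framework, which is precisely what you outline. Your elaboration of the three verification stages (graph structure, matching of L-convexity via Theorem~4.5 of \cite{Hir18}, and invocation) is more explicit than anything the paper writes, but it is the right checklist and nothing further is needed.
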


Our aim is to show \cref{thm:main-alt} using \cref{thm:L-persistency}.
For this, we show that the objective function and the indicator function of each constraint in the ILO problem~\eqref{eq:ILO} are L-extendable in what follows.

We first show that linear functions on $\mathbb{Z}^n$ are L-extendable, 
by showing that the extension of every linear function to half-integral vectors is an L-convex function on $(\frac{1}{2}\mathbb{Z})^n$.

\begin{lemma}\label{lem:linear-L-convex}
For $w \in \mathbb{Z}^n$ 
define $g^w: (\frac{1}{2}\mathbb{Z})^n \rightarrow \overline{\mathbb{R}}$ as $g^w(x) = w^Tx$.
Then $g^w$ is an L-convex function on $(\frac{1}{2}\mathbb{Z})^n$.
\end{lemma}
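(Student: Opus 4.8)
The plan is to verify the defining inequality \eqref{eq:L-convex-definition} of L-convexity directly for the linear function $g^w$. Fix arbitrary $x, y \in (\frac{1}{2}\mathbb{Z})^n$ and let $m = \frac{x+y}{2}$. Since $x$ and $y$ are half-integral, each coordinate $m_j$ lies in $\frac{1}{4}\mathbb{Z}$, so the operations $\lfloor \cdot \rfloor$ and $\lceil \cdot \rceil$ from the appendix (the ``away from integers'' and ``towards integers'' roundings on $\frac14\mathbb{Z}$) are defined on $m$. The key observation is that these two roundings are complementary: for every coordinate $j$ we have $\lfloor m_j \rfloor + \lceil m_j \rceil = 2m_j$. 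Indeed, if $m_j \in \frac12\mathbb{Z}$ then both values equal $m_j$; otherwise $m_j \pm \frac14 \in \mathbb{Z}$ for exactly one sign, and one rounding adds $\frac14$ while the other subtracts $\frac14$, so the sum is $2m_j$. Hence $\lfloor m \rfloor + \lceil m \rceil = 2m = x + y$ as vectors in $(\frac12\mathbb{Z})^n$.

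Given this, the computation is immediate by linearity:
\begin{align*}
g^w\!\left(\left\lfloor \tfrac{x+y}{2} \right\rfloor\right) + g^w\!\left(\left\lceil \tfrac{x+y}{2} \right\rceil\right)
= w^\mathrm{T}\lfloor m \rfloor + w^\mathrm{T}\lceil m \rceil
= w^\mathrm{T}(\lfloor m \rfloor + \lceil m \rceil)
= w^\mathrm{T}(x+y)
= g^w(x) + g^w(y).
\end{align*}
So \eqref{eq:L-convex-definition} holds with equality for every $x, y$, and $g^w$ is L-convex. Finally, the restriction of $g^w$ to $\mathbb{Z}^n$ is the linear function $x \mapsto w^\mathrm{T}x$ itself, which is the function we wanted to extend, so $g^w$ is moreover an L-convex relaxation of it (this is what is used downstream to deduce L-extendability of linear functions).

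The only delicate point is the coordinatewise identity $\lfloor m_j \rfloor + \lceil m_j \rceil = 2m_j$, which hinges on the nonstandard definitions of $\lfloor \cdot \rfloor$ and $\lceil \cdot \rceil$ introduced in the appendix rather than the ordinary floor and ceiling; I expect that a reader who misreads these as the usual roundings would be confused here, so I would state the identity explicitly as the one nontrivial step. Everything else is routine linearity, and the L-convexity inequality in fact holds with equality, which makes this the easiest of the L-extendability lemmas needed for \cref{thm:main-alt}.
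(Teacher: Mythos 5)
Your proposal is correct and follows essentially the same route as the paper: the key step in both is the coordinatewise identity $\lfloor \frac{x+y}{2} \rfloor + \lceil \frac{x+y}{2} \rceil = x+y$ for the appendix's nonstandard roundings, after which linearity of $w^\mathrm{T}(\cdot)$ gives equality in \eqref{eq:L-convex-definition}. Your explicit case check of that identity is a welcome elaboration of what the paper states as an easy observation, but there is no substantive difference in approach.
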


\begin{proof}
From \cref{def:L-convexity-on-half-integers}, 
it suffices to show that 
\begin{align}\label{eq:linear-L-convex}
g^w(x) + g^w(y) \ge g^w\left(\left\lfloor \frac{x+y}{2} \right\rfloor \right) + g^w\left(\left\lceil \frac{x+y}{2} \right\rceil \right)
\end{align}
for each $x,y \in (\frac{1}{2}\mathbb{Z})^n$.

To show this, 
we make the following observation, which is easy to prove but useful.
For $a,b \in \frac{1}{2}\mathbb{Z}$, we have 
\begin{align*}
\left\lfloor \frac{a+b}{2} \right\rfloor + \left\lceil \frac{a+b}{2} \right\rceil
= a + b.
\end{align*}
This can be immediately extended to the sum of two vectors $x,y \in (\frac{1}{2}\mathbb{Z})^n$: 
\begin{align}\label{obs:round-up-down-linear-vector}
\left\lfloor \frac{x+y}{2} \right\rfloor + \left\lceil \frac{x+y}{2} \right\rceil
= x + y.
\end{align}

From \eqref{obs:round-up-down-linear-vector}, we have 
\begin{align}
g^w(x) + g^w(y) & =w^Tx + w^Ty \\
&= w^T(x+y)\\
&= w^T(\left\lfloor \frac{x+y}{2} \right\rfloor + \left\lceil \frac{x+y}{2} \right\rceil)\\
&= w^T\left\lfloor \frac{x+y}{2} \right\rfloor + w^T\left\lceil \frac{x+y}{2} \right\rceil\\
&=g^w\left(\left\lfloor \frac{x+y}{2} \right\rfloor \right) + g^w\left(\left\lceil \frac{x+y}{2} \right\rceil \right).
\end{align}
Hence, we obtain \eqref{eq:linear-L-convex}.
This completes the proof.
\end{proof}

From \cref{lem:linear-L-convex}, linear function $w^Tx$ on $\mathbb{Z}^n$ is L-extendable, where its L-convex relaxation can be chosen as $w^Tx$ on $(\frac{1}{2}\mathbb{Z})^n$.
Thus, we obtain the following.

\begin{corollary}\label{cor:linear-L-extendable}
For $w \in \mathbb{Z}^n$ 
define $h^w: \mathbb{Z}^n \rightarrow \overline{\mathbb{R}}$ as $h^w(x) = w^Tx$.
Then, $h^w$ is L-extendable and $g^w$ defined in \cref{lem:linear-L-convex} is its L-convex relaxation.
\end{corollary}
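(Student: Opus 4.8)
The plan is to read the corollary straight off Lemma~\ref{lem:linear-L-convex} together with Definition~\ref{def:L-extendability-on-integers}. By that definition, to show $h^w$ is L-extendable it suffices to exhibit a single L-convex function $g\colon(\frac{1}{2}\mathbb{Z})^n \to \overline{\mathbb{R}}$ whose restriction to $\mathbb{Z}^n$ coincides with $h^w$. The obvious candidate is $g^w$, the function $x \mapsto w^{\mathrm T}x$ on $(\frac{1}{2}\mathbb{Z})^n$ from Lemma~\ref{lem:linear-L-convex}, which that lemma already certifies to be L-convex; so the whole argument is just to verify that $g^w$ meets the requirements of the definition.

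Concretely, I would first note that $\mathbb{Z}^n \subseteq (\frac{1}{2}\mathbb{Z})^n$, so ``the restriction of $g^w$ to $\mathbb{Z}^n$'' is meaningful. Then, for every $x \in \mathbb{Z}^n$ we have $g^w(x) = w^{\mathrm T}x = h^w(x)$ directly from the definitions of the two functions, so this restriction is literally $h^w$. Combined with L-convexity of $g^w$ from Lemma~\ref{lem:linear-L-convex}, this says precisely that $g^w$ is an L-convex relaxation of $h^w$ in the sense of Definition~\ref{def:L-extendability-on-integers}, and in particular that $h^w$ is L-extendable.

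There is no genuine obstacle here: all the content lives in Lemma~\ref{lem:linear-L-convex}, and the corollary merely repackages its consequence in the vocabulary needed downstream, namely an explicit L-convex relaxation of the linear objective of~\eqref{eq:ILO}. The only mild point worth a remark is that L-convex relaxations need not be unique---other L-convex functions on $(\frac{1}{2}\mathbb{Z})^n$ could also restrict to $h^w$---but uniqueness is not needed; fixing the canonical choice $g^w$ is all that the later derivation of Theorem~\ref{thm:main-alt} from Theorem~\ref{thm:L-persistency} requires.
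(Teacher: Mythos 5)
Your proposal is correct and matches the paper's treatment exactly: the paper also obtains the corollary immediately from Lemma~\ref{lem:linear-L-convex} by noting that $g^w$ is L-convex and its restriction to $\mathbb{Z}^n$ is $h^w$, so Definition~\ref{def:L-extendability-on-integers} applies with the canonical choice $g^w$.
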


Now, we show that the indicator function of each constraint in the ILO problem~\eqref{eq:ILO} is L-extendable.
For this, we show that the extension of each indicator function to half-integral vectors is an L-convex function on $(\frac{1}{2}\mathbb{Z})^n$ in the following lemma.

\begin{lemma}\label{lem:UTVPI-L-convex}
For $\sigma,\tau \in \{-,+\}$, $\beta \in \mathbb{Z}$, and $p,q\in \{1,\dots,n\}$ with $p \neq q$, 
let $g^\beta_{\sigma p \tau q}: (\frac{1}{2}\mathbb{Z})^n \rightarrow \overline{\mathbb{R}}$ (resp, $g^\beta_{\sigma p}: (\frac{1}{2}\mathbb{Z})^n \rightarrow \overline{\mathbb{R}}$) be the indicator function of the constraint $\sigma x_p + \tau x_q \ge \beta$ (resp., $\sigma x_p \ge \beta$) on $(\frac{1}{2}\mathbb{Z})^n$.
Namely, 
\begin{align*}
g^\beta_{\sigma p \tau q}(x) = 
\begin{cases}
0 & \mbox{if $\sigma x_p + \tau x_q \ge \beta$}\\
\infty & \mbox{otherwise},
\end{cases}
\end{align*}
and 
\begin{align*}
g^\beta_{\sigma p}(x) = 
\begin{cases}
0 & \mbox{if $\sigma x_p \ge \beta$}\\
\infty & \mbox{otherwise},
\end{cases}
\end{align*}
Then $g^\beta_{\sigma p \tau q}$ and $g^\beta_{\sigma p}$ are L-convex functions on $(\frac{1}{2}\mathbb{Z})^n$.
\end{lemma}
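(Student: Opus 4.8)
The plan is to verify the defining inequality~\eqref{eq:L-convex-definition} of L-convexity directly. Since $g^\beta_{\sigma p\tau q}$ and $g^\beta_{\sigma p}$ take only the values $0$ and $\infty$, that inequality holds automatically whenever its left-hand side equals $\infty$, so the only case to treat is when both $x$ and $y$ satisfy the constraint in question. Fixing such $x,y\in(\frac12\mathbb{Z})^n$ and writing $m=\frac{x+y}{2}\in(\frac14\mathbb{Z})^n$, it then suffices to show that both $\lfloor m\rfloor$ and $\lceil m\rceil$ satisfy the same constraint, because this makes the right-hand side equal to $0$ as well.

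For the two-variable indicator $g^\beta_{\sigma p\tau q}$ I would first reduce to the case $\sigma=\tau=+$. Inspecting the three-way definitions of $\lfloor\cdot\rfloor$ and $\lceil\cdot\rceil$ on $\frac14\mathbb{Z}$ shows that both operators are odd, so $\sigma\lfloor m_p\rfloor=\lfloor\sigma m_p\rfloor$ and $\tau\lfloor m_q\rfloor=\lfloor\tau m_q\rfloor$, and likewise with $\lceil\cdot\rceil$. Setting $t_1=\sigma m_p$ and $t_2=\tau m_q$, which both lie in $\frac14\mathbb{Z}$, the hypotheses $\sigma x_p+\tau x_q\ge\beta$ and $\sigma y_p+\tau y_q\ge\beta$ give $t_1+t_2=\frac12\bigl((\sigma x_p+\tau x_q)+(\sigma y_p+\tau y_q)\bigr)\ge\beta$, and the statement reduces to the self-contained claim: if $t_1,t_2\in\frac14\mathbb{Z}$ and $\beta\in\mathbb{Z}$ satisfy $t_1+t_2\ge\beta$, then $\lfloor t_1\rfloor+\lfloor t_2\rfloor\ge\beta$ and $\lceil t_1\rceil+\lceil t_2\rceil\ge\beta$.

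To prove this claim I would use the elementary bound $\lfloor t_i\rfloor\ge t_i-\frac14$, which holds with equality exactly when $t_i-\frac34\in\mathbb{Z}$; hence $\lfloor t_1\rfloor+\lfloor t_2\rfloor\ge t_1+t_2-\frac12\ge\beta-\frac12$. As $\lfloor t_1\rfloor+\lfloor t_2\rfloor\in\frac12\mathbb{Z}$, it is either already at least $\beta$ or equal to $\beta-\frac12$; in the latter case the whole chain of inequalities is tight, forcing $t_1+t_2=\beta$ together with $t_1-\frac34\in\mathbb{Z}$ and $t_2-\frac34\in\mathbb{Z}$, so that $t_1+t_2\in\frac12+\mathbb{Z}$, contradicting $t_1+t_2=\beta\in\mathbb{Z}$. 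The inequality for $\lceil\cdot\rceil$ is proved identically using $\lceil t_i\rceil\ge t_i-\frac14$ with equality exactly when $t_i-\frac14\in\mathbb{Z}$ (so that the forbidden tight case now puts $t_1,t_2\in\frac14+\mathbb{Z}$, again yielding $t_1+t_2\in\frac12+\mathbb{Z}$). The single-variable indicator $g^\beta_{\sigma p}$ is handled the same way but more easily: after the odd-function reduction the hypothesis becomes $m_p\ge\beta$ with $m_p\in\frac14\mathbb{Z}$, and then $\lfloor m_p\rfloor\ge m_p-\frac14\ge\beta-\frac14$ and $\lceil m_p\rceil\ge m_p-\frac14\ge\beta-\frac14$, which combined with membership in $\frac12\mathbb{Z}$ and $\beta\in\mathbb{Z}$ forces both to be at least $\beta$.

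The only delicate point, and the step I would be most careful about, is the tight boundary case $\lfloor t_1\rfloor+\lfloor t_2\rfloor=\beta-\frac12$: it is ruled out purely by the parity observation that two coordinates each rounded down by exactly $\frac14$ produce a sum in $\frac12+\mathbb{Z}$, which is incompatible with $\beta\in\mathbb{Z}$. Everything else is routine bookkeeping — the reduction to $\sigma=\tau=+$ via oddness of $\lfloor\cdot\rfloor$ and $\lceil\cdot\rceil$, and the bounds $\lfloor t\rfloor\ge t-\frac14$ and $\lceil t\rceil\ge t-\frac14$ — all of which follow by directly inspecting the definitions stated just before the lemma.
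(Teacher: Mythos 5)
Your proof is correct and follows essentially the same route as the paper: reduce to the case $g(x)=g(y)=0$ and verify that both roundings of the midpoint stay feasible, using the fact that $\lfloor\cdot\rfloor$ and $\lceil\cdot\rceil$ move each quarter-integral coordinate by at most $\frac14$ together with a parity argument over $\frac12\mathbb{Z}$ versus $\beta\in\mathbb{Z}$ to dispose of the tight case. The only differences are organizational: you normalize to $\sigma=\tau=+$ via oddness of the rounding operators and exclude the boundary value $\beta-\frac12$ by a tightness contradiction, whereas the paper keeps the signs and splits into cases according to whether the midpoint sum is at least $\beta+\frac12$, equal to $\beta+\frac14$, or equal to $\beta$.
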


\begin{proof}
Fix $\sigma,\tau \in \{-,+\}$, $\beta \in \mathbb{Z}$, and $p,q\in \{1,\dots,n\}$ with $p \neq q$.
We only show L-convexity of $g^\beta_{\sigma p \tau q}$; L-convexity of $g^\beta_{\sigma p}$ can be proven similarly.
We denote $g^\beta_{\sigma p \tau q}$ by $g$ for simplicity.
From \cref{def:L-convexity-on-half-integers}, 
it suffices to show that 
\begin{align}
g(x) + g(y) \ge g\left(\left\lfloor \frac{x+y}{2} \right\rfloor \right) + g\left(\left\lceil \frac{x+y}{2} \right\rceil \right)
\end{align}
for each $x,y \in (\frac{1}{2}\mathbb{Z})^n$.
This is equivalent to that 
$g(x)=g(y)=0 \Rightarrow g\left(\left\lfloor \frac{x+y}{2} \right\rfloor \right) = g\left(\left\lceil \frac{x+y}{2} \right\rceil \right) = 0$, which we will show in what follows.
Assume that $g(x)=g(y)=0$, i.e., $\sigma x_p + \tau x_q \ge \beta$ and $\sigma y_p + \tau y_q \ge \beta$.
We have to show that 
\begin{align}
&\sigma \left\lfloor \frac{x_p+y_p}{2} \right\rfloor + \tau \left\lfloor \frac{x_q+y_q}{2} \right\rfloor \ge \beta\mbox{, and}\label{eq:floor}\\
&\sigma \left\lceil \frac{x_p+y_p}{2} \right\rceil + \tau \left\lceil \frac{x_q+y_q}{2} \right\rceil \ge \beta.\label{eq:ceil}
\end{align}
From the assumption, we have $\sigma \frac{x_p+y_p}{2} + \tau \frac{x_q+y_q}{2} \ge \beta$.
If $\frac{x_p+y_p}{2}, \frac{x_q+y_q}{2} \in \frac{1}{2}\mathbb{Z}$, then $\left\lfloor \frac{x_p+y_p}{2} \right\rfloor = \left\lceil \frac{x_p+y_p}{2} \right\rceil = \frac{x_p+y_p}{2}$
and $\left\lfloor \frac{x_q+y_q}{2} \right\rfloor = \left\lceil \frac{x_q+y_q}{2} \right\rceil = \frac{x_q+y_q}{2}$.
Hence, Inequalities \eqref{eq:floor} and \eqref{eq:ceil} hold.
Assume without loss of generality that $\frac{x_p+y_p}{2} \not\in \frac{1}{2}\mathbb{Z}$.
We divide into three cases according to the value $\sigma \frac{x_p+y_p}{2} + \tau \frac{x_q+y_q}{2}$.

\paragraph*{Case 1: $\sigma \frac{x_p+y_p}{2} + \tau \frac{x_q+y_q}{2} \ge \beta + \frac{1}{2}$.}
Taking $\lfloor \cdot \rfloor$ (resp., $\lceil \cdot \rceil$) loses at most $\frac{1}{4}$.
Hence, $\sigma \left\lfloor \frac{x_p+y_p}{2} \right\rfloor + \tau \left\lfloor \frac{x_q+y_q}{2} \right\rfloor \ge \sigma \frac{x_p+y_p}{2} - \frac{1}{4} + \tau \frac{x_q+y_q}{2} - \frac{1}{4} \ge \beta$, obtaining \eqref{eq:floor}.
Similarly, we have \eqref{eq:ceil}.

\paragraph*{Case 2: $\sigma \frac{x_p+y_p}{2} + \tau \frac{x_q+y_q}{2} = \beta + \frac{1}{4}$.}
From $\frac{x_p+y_p}{2} \not\in \frac{1}{2}\mathbb{Z}$, we have 
$\tau \frac{x_q+y_q}{2} = \beta + \frac{1}{4} - \sigma \frac{x_p+y_p}{2} \in \frac{1}{2}\mathbb{Z}$.
Hence, $\tau \left\lfloor \frac{x_q+y_q}{2} \right\rfloor = \tau \left\lceil \frac{x_q+y_q}{2} \right\rceil = \tau \frac{x_q+y_q}{2}$.
Therefore, from $\sigma \left\lfloor \frac{x_p+y_p}{2} \right\rfloor, \sigma \left\lceil  \frac{x_p+y_p}{2} \right\rceil \ge \sigma \frac{x_p+y_p}{2} - \frac{1}{4}$, 
we have \eqref{eq:floor} and \eqref{eq:ceil}.

\paragraph*{Case 3: $\sigma \frac{x_p+y_p}{2} + \tau \frac{x_q+y_q}{2} = \beta$.}
In this case, we have $\sigma \left\lfloor \frac{x_p+y_p}{2} \right\rfloor = \sigma \frac{x_q+y_q}{2} + \frac{1}{4}$ if and only if $\tau \left\lfloor \frac{x_q+y_q}{2} \right\rfloor = \tau \frac{x_q+y_q}{2} - \frac{1}{4}$.
Hence, we have \eqref{eq:floor}.
From $\sigma \left\lfloor \frac{x_p+y_p}{2} \right\rfloor + \sigma \left\lceil  \frac{x_p+y_p}{2} \right\rceil = \sigma (x_p+y_p)$ and $\tau \left\lfloor \frac{x_q+y_q}{2} \right\rfloor + \tau \left\lceil  \frac{x_q+y_q}{2} \right\rceil = \tau (x_q+y_q)$, we also have \eqref{eq:ceil}.

Hence, $g$ is L-convex on $(\frac{1}{2}\mathbb{Z})^n$.
\end{proof}

From \cref{lem:UTVPI-L-convex}, 
we immediately have the following.

\begin{corollary}\label{cor:UTVPI-L-extendable}
For $\sigma,\tau \in \{-,+\}$, $\beta \in \mathbb{Z}$, and $p,q\in \{1,\dots,n\}$ with $p \neq q$, 
let $h^\beta_{\sigma p \tau q}: \mathbb{Z}^n \rightarrow \overline{\mathbb{R}}$ (resp, $h^\beta_{\sigma p}: \mathbb{Z}^n \rightarrow \overline{\mathbb{R}}$) be the indicator function of the constraint $\sigma x_p + \tau x_q \ge \beta$ (resp., $\sigma x_p \ge \beta$) on $\mathbb{Z}^n$.
Then $h^\beta_{\sigma p \tau q}$ (resp, $h^\beta_{\sigma p}$) is L-extendable and $g^\beta_{\sigma p \tau q}$ (resp., $g^\beta_{\sigma p}$) defined in \cref{lem:UTVPI-L-convex} is its L-convex relaxation.
\end{corollary}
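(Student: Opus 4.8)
The plan is to read off the corollary from \cref{def:L-extendability-on-integers}, with essentially no new work beyond \cref{lem:UTVPI-L-convex}. To show that $h^\beta_{\sigma p \tau q}$ is L-extendable with L-convex relaxation $g^\beta_{\sigma p \tau q}$, \cref{def:L-extendability-on-integers} requires exactly two things: (i) that $g^\beta_{\sigma p \tau q}$ is an L-convex function on $(\frac{1}{2}\mathbb{Z})^n$, and (ii) that the restriction of $g^\beta_{\sigma p \tau q}$ to $\mathbb{Z}^n$ coincides with $h^\beta_{\sigma p \tau q}$.

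Item (i) is precisely the statement of \cref{lem:UTVPI-L-convex}, so nothing further is needed there. For item (ii) I would observe that $g^\beta_{\sigma p \tau q}$ and $h^\beta_{\sigma p \tau q}$ are given by the identical rule — the indicator taking value $0$ when $\sigma x_p + \tau x_q \ge \beta$ and $\infty$ otherwise — and differ only in their declared domains, $(\frac{1}{2}\mathbb{Z})^n$ versus $\mathbb{Z}^n$. Since $\mathbb{Z}^n \subseteq (\frac{1}{2}\mathbb{Z})^n$, for every $x \in \mathbb{Z}^n$ the value $g^\beta_{\sigma p \tau q}(x)$ is computed by the same case split as $h^\beta_{\sigma p \tau q}(x)$, so the two functions agree pointwise on $\mathbb{Z}^n$. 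Combining (i) and (ii) and invoking \cref{def:L-extendability-on-integers} yields that $h^\beta_{\sigma p \tau q}$ is L-extendable and that $g^\beta_{\sigma p \tau q}$ is an L-convex relaxation of it. The single-variable claim is handled identically: \cref{lem:UTVPI-L-convex} already gives L-convexity of $g^\beta_{\sigma p}$, and $g^\beta_{\sigma p}$ restricted to $\mathbb{Z}^n$ equals $h^\beta_{\sigma p}$ because both are the indicator of $\sigma x_p \ge \beta$.

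There is no genuine obstacle here; the corollary is a bookkeeping consequence of the lemma, and the only point worth making explicit is the trivial observation that evaluating the half-integral indicator at integer points reproduces the integral indicator. In particular, this corollary together with \cref{cor:linear-L-extendable} supplies the building blocks — an L-extendable objective term and L-extendable constraint indicators — which, once their sum is argued to be L-extendable with the natural half-integral extension as relaxation, will let us apply \cref{thm:L-persistency} to obtain \cref{thm:main-alt}.
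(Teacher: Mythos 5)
Your proposal is correct and matches the paper, which derives this corollary immediately from \cref{lem:UTVPI-L-convex} via \cref{def:L-extendability-on-integers}: the only content beyond the lemma is the observation that the half-integral indicator restricted to $\mathbb{Z}^n$ coincides with the integral indicator, which you state explicitly.
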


\begin{remark}
The function $h^\beta_{\sigma p \tau q}$ defined in \cref{cor:UTVPI-L-extendable} is \emph{not} in general a ``coarsening'' of an L-convex function on $(\frac{1}{2}\mathbb{Z})^n$, 
i.e., an L-convex function on $\mathbb{Z}^n$ where $\mathbb{Z}^n$ is regarded as a directed graph with the vertex set $\mathbb{Z}^n$ and the arc set $\{ (x,y) \in \mathbb{Z} \times \mathbb{Z} \mid |x-y|=1, x \text{ is even} \}$.
%which is a ``coarsening'' of an L-convex function on $(\frac{1}{2}\mathbb{Z})^n$, 
We note that such an L-convex function is called a UJ-convex function in~\cite{Fuj14}.
\end{remark}

Now, we are ready to show \cref{thm:main-alt} using \cref{thm:L-persistency}.

\begin{proof}[Proof of \cref{thm:main-alt}]
Assume that the ILO problem~\eqref{eq:ILO} is feasible.
For the ILO problem~\eqref{eq:ILO}, 
define $h^w: \mathbb{Z}^n \rightarrow \overline{\mathbb{R}}$ as $h(x)=w^Tx$ and $h_i: \mathbb{Z}^n \rightarrow \overline{\mathbb{R}}$ as the indicator function of $A_ix \ge b_i$ for each $i = 1,\dots,m$.
Define $h = h^w + \sum_{i=1}^{m}h_i$.
Then $h$ is L-extendable and one can choose as its L-convex relaxation 
the L-convex function $g$ obtained by naturally extending $h$ to half-integral vectors by \cref{cor:linear-L-extendable,cor:UTVPI-L-extendable}, 
since the non-negative sum of L-convex functions is again L-convex by definition.
Clearly, $g$ has discrete image.
Let $x^*$ be a half-integral optimal solution of the LO relaxation~\eqref{eq:LO}.
Then $x^*$ is a minimizer of $g$.
Hence, from \cref{thm:L-persistency}, there exists a minimizer of $h$ in $N_{\frac{1}{2}}(x^*)$.
Since any minimizer of $h$ is an optimal integer solution of the ILO problem~\eqref{eq:ILO}, 
we obtain \cref{thm:main-alt}.
\end{proof}

\end{document}